\crefname{equation}{}{}
\numberwithin{equation}{section}
\newcommand\sumstar{\mathop{{\sum\nolimits^{\mathrlap{\star}}}}}
\def\R{\mathbb R}
\def\Z{\mathbb Z}
\def\C{\mathbb C}
\def\P{\mathbb P}
\def\E{\mathbb E}
\def\GK{\mathbf{1}_{\mathcal{G}(k)}}
\def\GKT{\mathbf{1}_{\mathcal{G}(k,t)}}
\def\HKT{\mathbf{1}_{\mathcal{H}(k,t)}}
\def\n{\textbf{n} }
\def\r{\right}
\def\CA{\mathcal{A}}
\def\ee{\varepsilon}
\def\fkr{F_{ k}^{(R)}}
\def\AR{\CA_R(x)}
\newtheorem{theorem}{Theorem}[section]
\newtheorem{lemma}[theorem]{Lemma}
\newtheorem{proposition}[theorem]{Proposition}
\newtheorem{question}[theorem]{Question}
\theoremstyle{remark}
\newtheorem{remark}[theorem]{Remark}
\theoremstyle{definition}
\theoremstyle{remark}
\numberwithin{equation}{section}
\begin{document}
\title[Better than square-root cancellation]{Better than square-root cancellation for random multiplicative functions}

\author{Max Wenqiang Xu}
\address{Department of Mathematics, Stanford University, Stanford, CA, USA}
\email{maxxu@stanford.edu}

% \date{\today}
\begin{abstract}
    We investigate when the better than square-root cancellation phenomenon exists for $\sum_{n\le N}a(n)f(n)$, where $a(n)\in \mathbb{C}$ and $f(n)$ is a random multiplicative function. We focus on the case where $a(n)$ is the indicator function of $R$ rough numbers. We prove that $\log \log R \asymp (\log \log x)^{\frac{1}{2}}$ is the threshold for the better than square-root cancellation phenomenon to disappear.
\end{abstract}
\maketitle

\section{introduction}

The study of random multiplicative functions has attracted intensive attention. Historically, they were introduced to model arithmetic functions. A Steinhaus random multiplicative function $f(n)$ is a completely multiplicative function defined on positive integers such that $f(p)$ are independently and uniformly distributed on the complex unit circle for all primes $p$. One may view it as a random model for arithmetic functions like Dirichlet characters $\chi(n)$ or $n^{it}$. Another popular model is the Rademacher random multiplicative function $f(n)$ which was first used by Wintner\cite{Win} as a random model for M\"obius function $\mu(n)$. In this note, we focus on the Steinhaus case. The obvious dependence between random variables $f(m)$ and $f(n)$ whenever $(m, n)\neq 1$ makes the study of random multiplicative functions intriguing.

Arguably the most striking result so far in the study of random multiplicative functions is Harper's \cite{HarperLow} remarkable resolution of Helson's conjecture\cite{Helson} (see \cite{BS2016} for some earlier discussions), that is, the partial sums of random multiplicative functions enjoy better than square-root cancellation
\begin{equation}\label{eqn: harper}
  \E [|\sum_{n\le x} f(n) |] \asymp \frac{\sqrt{x}}{(\log \log x)^{1/4}},  
\end{equation}
where $f(n)$ are random multiplicative functions. 
In particular, with the natural normalization $\sqrt{x}$, the partial sums $\sum_{n\le x }f(n)$ do not converge in distribution to the standard complex normal distribution (see also \cite{Harper}). Before Harper's result \cite{HarperLow}, there was progress on proving good lower bounds close to $\sqrt{x}$, e.g. \cite{HNR15}, and it was not clear that such better than square-root cancellation in \eqref{eqn: harper} would appear until Harper's proof. See also recent companion work on analogous results in the character sums and zeta sums cases established by Harper \cite{Harpertypical, harper2019partition}.
It is known that the better than square-root cancellation phenomenon in random multiplicative functions is connected to the ``critical multiplicative chaos" in the probability literature. We point out references \cite{NPS, chaosreview, Ber, DRS, HKO} for related discussions.  

A closely related important question in number theory is to understand the distribution of the Riemann zeta function over typical intervals of length $1$ on the critical line 
$\mathfrak{Re}(s)=\frac12$. One may crudely see the connection by viewing $\zeta(s)$ as a sum of $n^{-\frac12-it}$ for a certain range of $n$ and $n^{it}$ behaves like a Steinhaus random multiplicative function for randomly chosen $t$. A conjecture of Fyodorov, Hiary, and Keating (see e.g. \cite{FK14, FHK12}) suggests that there is a subtle difference between the true order of local maximal of $\log|\zeta(1/2+it)|$ and one's first guess based on Selberg's central limit theorem for $\log|\zeta(1/2+it)|$. The existence of this subtle difference and the appearance of the better than square-root cancellation for random multiplicative functions both show that the corresponding nontrivial dependence can not be ignored. We refer readers to \cite{BK22, AOR21, Gerspach, ABBRS, ABH, harper2019partition, ABR1, ABR2, SoundICM, Halasz, Harpersuprema, Harperlargevalue, LTW13, Najextreme, Gerspachtoy} for related discussions about partial sums of random multiplicative functions and zeta values distribution.

In this paper, we are interested in further exploring Harper's result \eqref{eqn: harper} and methods used there, by considering the problem in a more general context. 
\begin{question}\label{question main} Let $a(n)$ be a sequence in $\C$. 
When does the better than square-root cancellation phenomenon hold for $\sum_{n\le N} a(n)f(n)$, i.e.
\begin{equation}\label{eqn: a_n}
 \E [|\sum_{n\le N}a(n) f(n)|] = o\left(\sqrt{\sum_{n\le N}|a(n)|^{2}}\r)?    
\end{equation}
\end{question}

We first make some simple observations in the situations where $a(n)$ is ``typical" or $a(n)$ has a rich multiplicative structure. Then we focus on a particular case where the coefficient $a(n)$ is an indicator function of a multiplicative set. 
\subsection{Typical coefficients}
If partial sums $\sum_{n\le N}a(n)f(n)$ with the square-root size normalization behave like the complex standard Gaussian variable, then there is just square-root cancellation. One may attempt to prove such a central limit theorem by computing the high moments, however, the moments usually blow up and such a strategy does not work here (see e.g. \cite{wangxu, Harperhigh, HNR15, HL} for moments computation results). It turns out that for ``typical" choices of $a(n)$, such a central limit theorem does hold. It has been carried out in the concrete case where $a(n)=e^{2\pi i n \theta}$ for some fixed real $\theta$ without too good Diophantine approximation properties (such $\theta$ has relative density $1$ in $\R$, e.g. one can take $\theta=\pi$) by Soundararajan and the author \cite{SoundXu}, and also an average version of the result is proved by Benatar, Nishry and Rodgers \cite{BNR}. The proof of the result in \cite{SoundXu} is based on McLeish's martingale central limit theorem\cite{McLeish}, and the method was pioneered by Harper in \cite{Harper}.  
The proof reveals the connection between the existence of such a central limit theorem and a quantity called \textit{multiplicative energy} of $\textbf{a}:= \{a(n): 1\le n\le N\}$ 
\[E_{\times}(\mathbf{a}): = \sum_{\substack{m_1, n_1, m_2, n_2 \le N\\ m_1m_2=n_1n_2} }a(m_1)a(m_2) \overline{a(n_1)a(n_2)}. \]
A special case of $a(n)$ is an indicator function of a set $\CA$, and the quantity $E_{\times}(\CA)$ is a popular object studied in additive combinatorics. It is now known \cite{SoundXu} that a crucial condition for such a central limit theorem to hold for $\sum_{n\le N}a(n)f(n)$ is that the set $\CA$ has multiplicative energy $\le(2+\epsilon)|\CA|^{2}$. See \S \ref{Sec: typical} for more discussions on $a(n)$ being a ``typical" choice.  We refer readers who are interested in seeing more examples of when a central limit theorem holds for partial (restricted) sums of random multiplicative functions to \cite{SoundXu, KSX, Harper, CS, PWX, Hough, BNR}.  

\subsection{Large multiplicative energy and sparse sets} Let us focus on the case that $a_n$ is an indicator function of a set $\CA$. As we mentioned if the set $\CA$ has small multiplicative energy (among other conditions), then partial sums exhibit square-root cancellation. Suppose we purposely choose a set $\CA$ with very large multiplicative energy, will it lead to better than square-root cancellation? One extreme example is $\CA= \{p^{n}: 1\le n \le \log_p N\}$ being a geometric progression, where $p$ is a fixed prime. A standard calculation gives that
\[\E[| \sum_{n\in \CA} f(n) |] =  \int_{0}^{1} |\sum_{n\le \log_p N}e(\theta n)| d\theta \asymp \log \log N,  \]
while $\E[| \sum_{n\in \CA} f(n) |^{2}] 
 = |\CA|\asymp \log N$ and $\E[| \sum_{n\in \CA} f(n) |^{4}] \gg |\CA|^{3}$ which is pretty large. 
It shows that there is a great amount of cancellation in this particular example when the multiplicative energy is large. One may also take $\CA$ to be some generalized (multidimensional) geometric progression and get strong cancellation of this type. We note that the sets mentioned here with very rich multiplicative structures all have small sizes. 

Based on the initial thoughts above, we may lean toward believing that better than square-root cancellation only appears when $a(n)$ has some particular structure that is perhaps related to multiplicativity.  
To fully answer Question~\ref{question main} seems hard. The majority of the paper is devoted to a special case, where $a(n)$ is an indicator function of a set with multiplicative features. We focus on fairly large subsets.
\subsection{Main results: multiplicative support}
Suppose now that $a(n)$ is a multiplicative function with $|a(n)|\le 1$. 
The particular example we study in this paper is that $a(n)$ is the indicator function of $R$-rough numbers, although the proof here may be adapted to other cases when $a(n)$ is multiplicative.  
We write 
\begin{equation}\label{eqn: rough}
    \CA_{R}(x): = \{n\le x: p|n \implies p\ge R\}.
\end{equation}
% Theorem 3.7 in Ford notes
By a standard sieve argument, for all $2\le R\le x/2$ (the restriction $R\le x/2$ is only needed for the lower bound), we have asymptotically \cite{Bruijn50} 
\begin{equation}\label{eqn: sieve}
   |\CA_R(x)| \asymp \frac{x}{\log R}.  
\end{equation}
We expect the following threshold behavior to happen. If $R$ is very small, the set $\CA_R(x)$ is close to $[1,x]$ and better than square-root cancellation appears as in \cite{HarperLow}. If $R$ is sufficiently large, then weak dependence may even lead to a central limit theorem. Indeed, an extreme case is that $R> \sqrt{x}$, in which $\CA_R(x)$ is a set of primes and $\{f(n): n\in \AR\}$ is a set of independent random variables. It is natural to ask to what extent the appearance of small primes is needed to guarantee better than square-root cancellation.
Our Theorem~\ref{thm: upper} and Theorem~\ref{thm: lower} answer the question. We show that $\log \log R \approx (\log \log x)^{1/2}$ is the threshold.

\begin{theorem}\label{thm: upper}
  Let $f(n)$ be a Steinhaus random multiplicative function and $x$ be large. Let $\CA_R(x)$ be the set of 
  $R$ rough numbers up to $x$. For any $\log \log R\ll (\log \log x)^{\frac{1}{2}}$, we have   
  \[\E[|\sum_{n\in \CA_R(x)} f(n) |] \ll \sqrt{|\CA_{R}(x)|} \cdot  \Big ( \frac{\log \log R +  \log \log \log x}{\sqrt{\log \log x}} \Big)^{\frac{1}{2}} .  \]
 In particular, if $\log \log R = o((\log \log x)^{\frac{1}{2}})$, then  
   \[\E[|\sum_{n\in \CA_R(x)} f(n) |] =o \big( \sqrt{|\CA_{R}(x)|}\big).\]
\end{theorem}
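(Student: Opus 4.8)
The plan is to follow Harper's strategy for proving the upper bound in \eqref{eqn: harper}, but carefully tracking the effect of restricting to $R$-rough numbers. The starting point is to bound the first moment by a second moment of a truncated Euler product (a "low-moments via conditioning" step). Precisely, for a parameter $x_0$ I would relate $\E[|\sum_{n\in \CA_R(x)} f(n)|]$ to an integral of the random Euler product $\prod_{R\le p\le x_0}(1-f(p)p^{-1/2-it})^{-1}$ over $t$ in a short interval, using Harper's machinery (a Cauchy--Schwarz / Hölder step combined with a conditional second-moment estimate and the key inequality $\E[|S|]\ll \E[|S|^2/ \text{(something)}]$-type argument that yields better-than-square-root savings). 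The net effect is that
\[
\E\Big[\Big|\sum_{n\in \CA_R(x)} f(n)\Big|\Big] \ll \sqrt{|\CA_R(x)|}\cdot \Big(\E\Big[\int_{-1/2}^{1/2} \prod_{R\le p\le x}\Big|1-\frac{f(p)}{p^{1/2+it}}\Big|^{-2}\,\d t\Big]\Big)^{1/2}\cdot(\text{log factors}),
\]
or more accurately a version where the integrand is raised to a fractional power; the point is that the arithmetic input is controlled by the size of a random multiplicative chaos built only from primes in $[R,x]$.

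Next I would evaluate the expectation of that Euler-product integral. Taking the expectation over $f$, $\E\prod_{R\le p\le x}|1-f(p)p^{-1/2-it}|^{-2}$ is, up to constants, $\exp\big(\sum_{R\le p\le x} p^{-1}\big) \approx \frac{\log\log x}{\log\log R}$ by Mertens, and the integral over the short interval in $t$ introduces the critical-chaos correction. In Harper's unrestricted case the sum runs over all $p\le x$ and the relevant quantity is of size $\log\log x$, producing the $(\log\log x)^{-1/4}$ in \eqref{eqn: harper} after the delicate critical-regime analysis (Girsanov/ballot-type estimate showing the chaos has size $\asymp \frac{\log\log x}{\sqrt{\log\log x}}$ rather than $\log\log x$). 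Here the sum $\sum_{R\le p\le x}p^{-1} \asymp \log\log x - \log\log R$, which is still $\asymp \log\log x$ provided $\log\log R = o(\log\log x)$ — in particular whenever $\log\log R \ll (\log\log x)^{1/2}$. So the chaos is still "critical" and one gets a saving, but the saving is governed by the \emph{number of scales} $\log\log x - \log\log R$ versus the \emph{total variance}, and when $R$ is large the first few (smallest) scales are missing, which weakens the barrier/ballot estimate. Quantitatively, redoing the Harper-type multiplicative-chaos bound with the prime range $[R,x]$ should give that the relevant integral is
\[
\ll \frac{\log\log R + \log\log\log x}{\sqrt{\log\log x}},
\]
the numerator reflecting the "missing scales" up to $\log\log R$ plus the usual $\log\log\log x$ secondary term from the endpoint of the chaos analysis. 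Squaring inside the earlier display and inserting $|\CA_R(x)|\asymp x/\log R$ from \eqref{eqn: sieve} yields exactly the stated bound.

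The main obstacle I anticipate is the critical multiplicative chaos estimate in the truncated range: Harper's proof of \eqref{eqn: harper} uses a careful iterative/recursive argument (conditioning prime by prime, or dyadic block by dyadic block, and running a Gaussian random-walk "ballot problem" argument) that crucially exploits having \emph{all} scales from $\log\log 2$ up to $\log\log x$. With the smallest scales removed one must check that the random walk still has enough room to stay below its barrier, and identify precisely how the deficit enters — this is where the $\log\log R$ term in the numerator comes from, and getting the right power of $\log\log R$ (linear, not squared or square-rooted) is the delicate point. A secondary technical issue is that the conditioning argument and the passage from $\sum_{n\in\CA_R(x)} f(n)$ to the Euler product require showing that restricting to $R$-rough $n$ doesn't distort the second-moment heuristics; since $\CA_R$ is a multiplicative set this should go through by a sieve/Euler-product factorization, but the bookkeeping (especially tracking the dependence on $R$ uniformly) needs care. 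Once the chaos bound is in hand, the rest — Cauchy--Schwarz, Mertens, and plugging in \eqref{eqn: sieve} — is routine.
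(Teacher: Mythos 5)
The overall route you describe is the same as the paper's: reduce $\E[|\sum_{n\in\CA_R(x)} f(n)|]$ to an $L^q$-average of the random Euler product $F^{(R)}(\tfrac12+it)=\prod_{R\le p\le x}(1-f(p)p^{-1/2-it})^{-1}$ over a short $t$-interval (this is Proposition~\ref{Prop: upper L2}, obtained by conditioning on the largest prime factor and applying Parseval), and then bound that average by a Girsanov/ballot-problem argument for the random walk formed by the logarithms of the increments $I_\ell$. So the strategy is right, and you correctly anticipate that the arithmetic still sits in the critical chaos regime since $\sum_{R\le p\le x}p^{-1}\asymp\log\log x-\log\log R\asymp\log\log x$. (One small slip: $\exp\bigl(\sum_{R\le p\le x}p^{-1}\bigr)\asymp\log x/\log R$, not $\log\log x/\log\log R$; you conflated the sum with its exponential.)

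The genuine gap is in the part you flag yourself as the delicate point: identifying the mechanism by which $\log\log R$ enters the numerator. Your heuristic is that ``the missing scales below $\log\log R$ weaken the barrier/ballot estimate,'' i.e.\ that the random walk has fewer steps and hence less room. But removing the scales below $\log\log R$ only changes the number of steps from $\log\log x$ to $\log\log x-\log\log R$, which stays $\asymp\log\log x$ in the range considered, so $\sqrt{n}$ in the denominator of the ballot probability is essentially unchanged. That is not where the $\log\log R$ comes from. The paper's mechanism is different and is the actual point: the barrier height $C(x)$ in the definition of the good event $\mathcal{G}(k)$ (see \eqref{eqn: Gk}, \eqref{eqn: C(x)}) must be raised from a constant (as in Harper's original proof) to $C(x)=\log\log R+100\log\log\log x$. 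This is forced by the union bound in Key Proposition~\ref{key prop 2}: one discretizes $t$ into $|\mathcal{T}(x,j)|\asymp(\log x/e^{j+1})\log(\log x/e^{j+1})$ points, but the Euler product second moment over $[R,x]$ is only $\E[\prod_\ell|I_\ell|^2]\asymp\log x/(e^{j+1}\log R)$; these differ by a factor $\asymp\log R$, and Chebyshev then gives $\P(\mathcal{G}(k)\ \text{fails})\ll e^{-2C(x)}\log R\cdot(\log\log x)^{O(1)}$, which is small only if $C(x)\gg\log\log R+\log\log\log x$. The ballot estimate (Proposition~\ref{prop 5}) contributes the saving $\asymp a/\sqrt{n}$ with $a\asymp C(x)$, and it is this $a$, not $n$, that carries the $\log\log R$. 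The two constraints $C(x)=o(\sqrt{\log\log x})$ (so the saving is real) and $C(x)\gg\log\log R$ (so the barrier holds with high probability) are exactly what produce the threshold $\log\log R\asymp\sqrt{\log\log x}$, as the paper explains in Remark~\ref{rem: why}. Without identifying this barrier-height tradeoff, the proposal cannot actually produce the stated bound.
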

The term $\log \log \log x$ is likely removable. But for the convenience of the proof, we state the above version. See Remark~\ref{rem: 100} for more discussions.

\begin{theorem}\label{thm: lower}
  Let $f(n)$ be a Steinhaus random multiplicative function and $x$ be large. Let $\CA_R(x)$ be the set of 
  $R$ rough numbers up to $x$. For any $\log \log R\gg  (\log \log x)^{\frac{1}{2}}$, we have   
  \[\E[|\sum_{n\in \CA_R(x)} f(n) |] \gg  \sqrt{|\CA_{R}(x)|} .  \]
\end{theorem}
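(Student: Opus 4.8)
The plan is to establish the lower bound by a second-moment (Paley–Zygmund type) argument applied to a suitable smoothed or Dirichlet-polynomial proxy for the partial sum, exploiting the fact that once $\log\log R \gg (\log\log x)^{1/2}$ the relevant ``multiplicative chaos'' has effective length $\log\log x - \log\log R \le (1-c)\log\log x$, which is short enough that the critical-regime losses of \cite{HarperLow} do not materialize. Concretely, I would start from the standard identity relating $\E[|\sum_{n\in\CA_R(x)}f(n)|]$ to an integral over the critical line of a random Euler product: writing $F_R(s) = \prod_{p\ge R}(1-f(p)p^{-s})^{-1}$ restricted to primes in $[R,x]$, one has $\sum_{n\in\CA_R(x)}f(n)n^{-it}$ captured (after Parseval/contour manipulation as in \cite{Harper, HarperLow}) by $\int |F_R(\tfrac12+it)|^2\,\mathrm{d}t$-type quantities, and the key object becomes the random measure built from $\sum_{R\le p\le x}\frac{\mathrm{Re}(f(p)p^{-it})}{p}$.

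The main steps, in order, are as follows. First, reduce the lower bound for $\E[|\sum_{n\in\CA_R(x)}f(n)|]$ to a lower bound for $\E[\,|\int_{-1/2}^{1/2} F_R(\tfrac12+it)\,G(t)\,\mathrm{d}t|\,]$ for an appropriate mollifying kernel $G$, where $F_R$ now denotes the truncated random Euler product over primes in $[R,x]$; this is the same Parseval step used throughout the subject, and here the point is that the ``number of scales'' in the associated branching-random-walk heuristic is $\log\log x - \log\log R$. Second, since $\log\log R \gg (\log\log x)^{1/2}$ forces $\log\log x - \log\log R = \log\log x\,(1+o(1))$ in general \emph{unless} $R$ is large — wait, more carefully: for the lower bound we only need that the total variance $\sum_{R\le p\le x} p^{-1} \asymp \log\log x - \log\log R$ is \emph{comparable} to $\log\log x$ when $R$ is in the sub-critical range, but in fact the regime $\log\log R \gg (\log\log x)^{1/2}$ is exactly where the \emph{removed} part $\sum_{p<R}p^{-1} = \log\log R$ becomes large enough that the subtle $(\log\log x)^{1/4}$ saving in \eqref{eqn: harper}, which comes from the low primes $p \le \exp((\log x)^{\text{small}})$, is \emph{killed}. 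So the crucial step is to show that deleting the primes below $R$ destroys the critical-chaos cancellation: one shows that conditionally on the large primes, the contribution of primes in a dyadic range near the top behaves like an $L^1$-normalized sum with no further loss, i.e. one proves a \emph{conditional lower bound} $\E[|\sum_n a(n)f(n)| \mid f(p), p>P] \gg (\sum_n |a(n)|^2)^{1/2}$ for a suitable intermediate cutoff $P$ with $\log\log P \asymp (\log\log x)^{1/2}$. Third, one removes the conditioning: because the primes below $P$ contribute a random Euler-product factor whose $L^1$ norm is $\gg 1$ precisely when $\log\log P$ is small relative to... — concretely one uses that a random Dirichlet polynomial of length $L = \log\log P \asymp (\log\log x)^{1/2}$ has $\E[|\,\cdot\,|] \gg \E[|\,\cdot\,|^2]^{1/2} / L^{O(1)}$ trivially, but we need the stronger statement that it is $\gg \E[|\,\cdot\,|^2]^{1/2}$, which holds once $L$ is bounded, and for unbounded but slowly-growing $L$ one argues that the loss is at most $L^{-1/2+o(1)}$...

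Let me restate the heart of the matter more cleanly: the proof splits $\CA_R(x)$ multiplicatively as $n = ab$ with $a$ composed of primes in $[R, Q]$ and $b$ of primes in $[Q,x]$, for a threshold $Q$ with $\log\log Q - \log\log R \asymp (\log\log x)^{1/2}$; one conditions on $(f(p))_{p\le Q}$, applies the $L^1$-lower-bound machinery of Harper (the Paley–Zygmund / concentration argument, using that the conditional second moment and a suitably conditioned fourth moment are comparable up to a factor $\exp(O(\log\log Q - \log\log R)) = \exp(O((\log\log x)^{1/2}))$), and then takes expectation over the low primes. The point is that $\exp(O((\log\log x)^{1/2}))$ is exactly the budget one can afford to lose while still concluding $\gg \sqrt{|\CA_R(x)|}$, because there is no additional $(\log\log x)^{1/4}$-type loss once the number of scales is $O((\log\log x)^{1/2})$: the branching-random-walk barrier phenomenon that produces Harper's saving requires $\gg \log\log x$ scales to kick in. The main obstacle I anticipate is precisely the bookkeeping in this conditional moment comparison — showing that after conditioning on the low primes, the relevant normalized fourth moment (or the truncated second moment used in the Paley–Zygmund step) is within $\exp(O((\log\log x)^{1/2}))$ of the square of the second moment, uniformly over a positive-probability event in the low primes, and that the low-prime Euler factor does not itself shrink the $L^1$ norm. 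This is where one must import, essentially verbatim but with the roles of ``low'' and ``high'' primes adjusted, the hardest estimates from \cite{HarperLow} (the lower-bound half), together with the sieve asymptotic \eqref{eqn: sieve} to identify $\sum_{n\in\CA_R(x), n \equiv \cdot} 1$ with the right normalization.
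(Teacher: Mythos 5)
Your opening move — reduce $\E[|\sum_{n\in\CA_R(x)}f(n)|]$ via Parseval to a lower bound on averages of $|F^{(R)}(\tfrac12+it)|^2$, then run a second-moment (Paley--Zygmund/H\"older) argument — is the right one, and it matches the paper's Proposition~\ref{Prop: lower L2} and the H\"older step in the proof of Proposition~\ref{prop: random low}. But the implementation you sketch has a genuine gap that would sink it. You propose to condition on the low-prime values $(f(p))_{p\le Q}$ with $\log\log Q-\log\log R\asymp(\log\log x)^{1/2}$, compare a conditional second and fourth moment, and absorb a multiplicative loss of size $\exp\bigl(O((\log\log x)^{1/2})\bigr)$. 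There is no budget for such a loss: the target is the sharp bound $\gg\sqrt{|\CA_R(x)|}$, and any H\"older/Paley--Zygmund step in which $\E[Y^2]/(\E Y)^2$ is allowed to be $\exp\bigl(O((\log\log x)^{1/2})\bigr)$ produces an $L^1$ lower bound that is smaller than $\sqrt{|\CA_R(x)|}$ by a factor $\exp\bigl(-c(\log\log x)^{1/2}\bigr)$, which is not $\gg 1$. Moreover, the moment comparison you invoke is not available: as the paper notes after \eqref{eqn: 4th}, getting $\E[|\sum_{n\in\CA_R(x)}f(n)|^4]\ll|\CA_R(x)|^2$ requires the much stronger hypothesis $\log R\gg(\log x)^c$, and conditioning on primes up to $Q$ does not change the fact that the remaining product over $(Q,x]$ still has $\asymp\log\log x$ scales of randomness, so the conditional fourth-to-second-moment ratio is not controlled by $\exp\bigl(O(\log\log Q-\log\log R)\bigr)$. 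Your heuristic that the barrier phenomenon needs $\gg\log\log x$ scales to ``kick in'' is also misleading: in the regime $\log\log R\gg\sqrt{\log\log x}$ one still has $\log\log x-\log\log R\sim\log\log x$ scales in general, so the number of scales is not what changes.

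The ingredient you are missing is the one the paper calls the barrier events $L(t)$: one does not apply the second-moment argument to $\int_{-1/2}^{1/2}|F^{(R)}(\tfrac12+\tfrac{4V}{\log x}+it)|^2\,dt$ itself, but to the restricted integral $Y=\int_{\mathcal L}|F^{(R)}(\cdot)|^2\,dt$ over the \emph{random} set $\mathcal L$ of $t$ at which all the partial Euler products stay inside a corridor of width $D(x)\asymp\sqrt{\log\log x-\log\log R}$. Two things then have to be proved, and both use $\log\log R\gg\sqrt{\log\log x}$ in an essential way. First (Proposition~\ref{prop 5} plus the mean-square calculation), $\E[\mathbf 1_{L(0)}|F^{(R)}|^2]\gg\E[|F^{(R)}|^2]$, which needs the barrier height $D(x)$ to be $\gg\sqrt{\text{number of scales}}$; the choice $c\asymp1$ in \eqref{eqn: c} is exactly what the hypothesis $\log\log R\gg\sqrt{\log\log x}$ buys. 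Second (Proposition~\ref{prop: correlation}), $\E[Y^2]\ll(\E Y)^2$ with no $\sqrt{\log\log x}$-type loss: this is where one must show the two-point correlation $\E\bigl[\mathbf 1_{L(0)}\mathbf 1_{L(t)}|F^{(R)}(\tfrac12+\cdot)|^2|F^{(R)}(\tfrac12+\cdot+it)|^2\bigr]$ is as good as if the two factors were independent, and the mechanism is not fewer scales but decorrelation: for $|t|>1/\log R$ the products are already essentially independent (two-dimensional mean-square calculation, \eqref{eqn: 4.3}), and for $|t|\le1/\log R$ the measure of such $t$ is only $1/\log R$ and the barrier at $\mathbf 1_{L(t)}$, applied to the primes $p\le e^{V/|t|}$, supplies the missing saving because $\log R\ge\exp(4c\sqrt{\log\log x})$ dominates $e^{2c\sqrt{\log\log x-\log\log R}}$. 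Without this restricted-integral second moment and without the explicit $R$-dependent decorrelation, the conditional-moment route you describe will not close.
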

One probably can prove a lower bound of the shape $\sqrt{|\AR|}\cdot(\log \log R / \sqrt{\log \log x})^{-1/2}$
 when $\log \log R =o(\sqrt{\log \log x}$). We do not pursue this as we focus on finding the threshold value of $R$ instead of caring about the quantification of the exact cancellation.

We note that one way to derive a lower bound on $L^{1}$ norm is by proving an upper bound on $L^{4}$ norm. A simple application of H\"older's inequality gives that
\begin{equation}\label{eqn: 4th}
 |\CA_R(x)| = \E[|\sum_{n\in \CA_R(x)} f(n) |^{2}] \le \Big(\E[|\sum_{n\in \CA_R(x)} f(n) |^{4}]\Big)^{1/3} \Big(\E[|\sum_{n\in \CA_R(x)} f(n) |]\Big)^{2/3}.
\end{equation}
The fourth moment  $\ll|\AR|^{2}$ would imply that $L^{1}$ norm $\gg \sqrt{|\AR|}$. However, to achieve such a bound on the fourth moment, one requires $\log R \gg (\log x)^{c}$ for some constant $c$ (by using the method in \cite{SoundXu}), and thus this approach would not give the optimal range as in Theorem~\ref{thm: lower}.

Another reason for studying the fourth moment (multiplicative energy) is to understand the distribution. As mentioned before, this is the key quantity that needs to be understood in order to determine if random sums have Gaussian limiting distribution, via the criteria in \cite{SoundXu}. One may establish a central limit theorem in the range $R\gg \exp((\log x)^{c})$ for some small positive constant $c$.\footnote{One trick to get a smaller $c$ than by directly computing the fourth moment over the full sum is to take the anatomy of integers into account. We refer interested readers to \cite{xuzhou, PS1990} to see how this idea is connected to the correct exponent in extremal sum product conjecture of Elekes and Ruzsa \cite{ElekesRuzsa}.} Interested readers are suggested to adapt the proof of \cite[Corollary 1.2]{SoundXu}. We do not pursue results along this direction in this note. 

Theorem~\ref{thm: upper} and Theorem~\ref{thm: lower} are both proved by adapting Harper's robust method in \cite{HarperLow}, with some modifications, simplifications and new observations, and we sketch the strategy with a focus on how we find the threshold. We also refer readers to a model problem in the function field case by Soundararajan and Zaman \cite{SZ}.  The first step is to reduce the $L^{1}$ norm estimate to a certain average of the square of random Euler products. Basically, we prove that 
\begin{equation}\label{eqn: firststep}
 \E[|\sum_{n\in \AR } f(n)|] \approx \Big(\frac{x}{\log x}\Big)^{1/2} \cdot \E[(\int_{-1/2}^{1/2} |F^{(R)}(\frac{1}{2} + it)|^{2} dt  )^{1/2} ],   
\end{equation}
where $F^{(R)}(1/2+it) := \prod_{R\le p\le x} (1-\frac{f(p)}{p^{1/2+it}})^{-1}$ is the random Euler product over primes $R\le p \le x$. The challenging part is to give a sharp bound on the above expectation involving $|F^{(R)}(1/2+it)|^{2}$ for $|t|\le 1/2$. 

We first discuss the upper bound proof. If we directly apply H\"older's inequality (i.e. moving the expectation inside the integral in \eqref{eqn: firststep}), then 
we would only get the trivial upper bound  $\ll \sqrt{|\AR|}$ as $\E[|F^{(R)}(1/2+it)|^{2}]\approx \log x/\log R$. Harper's method starts with putting some ``barrier events" on the growth rate of all random partial Euler products for all $t$. Roughly speaking, it requires that for all $k$,
\begin{equation}\label{eqn: barieer}
\prod_{x^{e^{-(k+1)}}\le p \le x^{e^{-k}}} |1-\frac{f(p)}{p^{1/2+it}}|^{-1}~\text{``grows as expected" for all $|t|\le 1$}.    
\end{equation}
Denote such a good event by $\mathcal{G}$ and write $s=1/2+it$. By splitting the probability space based on the event $\mathcal{G}$ holding or not, and applying Cauchy–Schwarz inequality, we have 
\[
\begin{split}
\E[(\int_{-1/2}^{1/2} |F^{(R)}(s)|^{2} dt  )^{1/2} ]
&\approx \E[(\int_{-1/2}^{1/2}\mathbf{1}_{\mathcal{G}} |F^{(R)}(s)|^{2} dt  )^{1/2}] +  \E[(\int_{-1/2}^{1/2}\mathbf{1}_{\mathcal{G}~\text{fail}} |F^{(R)}(s)|^{2} dt  )^{1/2}]  \\
& \ll \E[(\int_{-1/2}^{1/2}\mathbf{1}_{\mathcal{G}} |F^{(R)}(s)|^{2} dt  )^{1/2}] + \P(\mathbf{1}_{\mathcal{G}~\text{fail}})^{1/2} (\E[|F^{(R)}(s)|^{2}])^{1/2} .    
\end{split}
 \]
According to the two terms above, there are two tasks that remain to be done. 
\begin{enumerate}
    \item 
Task 1: Show that the expectation is small, conditioning on $\mathbf{1}_{\mathcal{G}}$.
\item Task 2: Show that $\P(\mathbf{1}_{\mathcal{G}~\text{fail}})$ is sufficiently small. 
\end{enumerate} 
To accomplish task 1, Harper's method connects such an estimate to the ``ballot problem" or say Gaussian random walks (see \S \ref{sec: ballot}), which is used to estimate the probability of partial sums of independent Gaussian variables having a certain barrier in growth. Task 2 of estimating the probability of such good events $\mathcal{G}$ happening can be done by using some concentration inequality, e.g. Chebyshev's inequality. 
Our main innovation lies in setting up ``barrier events" in \eqref{eqn: barieer} properly which is not the same as in \cite{HarperLow}. On one hand, it should give a strong enough restriction on the growth rate of the products so that $\E[(\int_{-1/2}^{1/2}\mathbf{1}_{\mathcal{G}} |F^{(R)}(s)|^{2} dt  )^{1/2}]$ has a saving, compared to it without conditioning on $\mathbf{1}_{\mathcal{G}}$. On the other hand, one needs to show that such an event $\mathcal{G}$ is indeed very likely to happen which requires that the designed ``barrier" can not be too restrictive. To achieve the two goals simultaneously, we need $\log \log R = o( \sqrt{\log \log x})$ and this is the limit that we can push to (see Remark~\ref{rem: why}).

The lower bound proof in Theorem~\ref{thm: lower} uses the same strategy as in \cite{HarperLow} but is technically simpler. After the deduction step of reducing the problem to studying a certain average of the square of random Euler products (see \eqref{eqn: firststep}), we only need to give a lower bound of the shape $\gg (\log x / \log R)^{1/2}$ for the expectation on the right-hand side of \eqref{eqn: firststep}. Since the integrand $|F^{(R)}(s)|^{2}$ is positive, it suffices to prove such a lower bound when $t$ is restricted to a random subset $\mathcal{L}$. We choose $\mathcal{L}$ to be the set of $t$ such that certain properly chosen ``barrier events" hold.  The main difficulty is to give a strong upper bound on the restricted product $\E[\mathbf{1}_{t_1, t_2\in \mathcal{L}}|F^{(R)}(1/2+it_1)|^{2}|F^{(R)}(1/2+it_2)|^{2}]$ in the sense that the bound is as effective as in the ideal situation where the factors $|F^{(R)}(1/2+it_1)|^{2}$ and $|F^{(R)}(1/2+it_2)|^{2}$ are independent (see Proposition~\ref{prop: correlation}), and this is also the main reason that the condition $\log \log R \gg \sqrt{\log \log x}$ is needed subject to our chosen ``barrier events". Our proof of Theorem~\ref{thm: lower} does not involve the ``two-dimensional Girsanov calculation", which hopefully makes it easier for readers to follow.

\subsection*{Organization} We set up the proof outline of Theorem~\ref{thm: upper} in Section~\ref{S: thmupper} and defer the proof of two propositions to Section~\ref{S: uppereuler} and Section~\ref{s: upperpro} respectively. We put all probabilistic preparations in Section~\ref{S: prep} which will be used in the proof for both theorems. The proof of Theorem~\ref{thm: lower} is done in Section~\ref{s: lowerthm} and again we defer proofs of two key propositions to Section~\ref{s: lowereuler} and Section~\ref{s: lowerprop} respectively. Finally, we give more details about the ``typical" choices of $a(n)$ in Section~\ref{s: conclude}, as well as mentioning some natural follow-up open problems. 

\subsection*{Acknowledgement}
I would like to thank Adam Harper for helpful discussions, corrections, and comments on earlier versions of the paper and for his encouragement, and thank Kannan Soundararajan for discussions around the topic.  I am indebted to the anonymous referees for many helpful comments and corrections. 
The author is supported by the Cuthbert C. Hurd Graduate Fellowship in the Mathematical Sciences, Stanford. 

\section{Proof of Theorem~\ref{thm: upper}}\label{S: thmupper}
We follow the proof strategy of Harper in \cite{HarperLow}.
We establish Theorem~\ref{thm: upper} in a stronger form that for $1/2\le q \le 9/10$ and $R$ in the given range $\log \log R \ll (\log \log x)^{1/2}$,  
\[ \E [|\sum_{n\in \CA_R(x)} f(n) |^{2q}] \ll |\CA_R(x)|^{q} \Big ( \frac{\log \log R +  \log \log \log x}{\sqrt{\log \log x} } \Big)^{q}.  \]
One should be able to push the range of $q$ to 1 but for simplicity in notation, we omit it. Our interest is really about the case $q=1/2$.
Note that in the given range of $R$, by \eqref{eqn: sieve}, it is the same as proving 
\[ \E [|\sum_{n\in \CA_R(x)} f(n) |^{2q}] \ll \Big(\frac{x}{\log R}\Big)^{q} \Big ( \frac{\log \log R +  \log \log \log x}{\sqrt{\log \log x} } \Big)^{q}.\]

The first step (Proposition~\ref{Prop: upper L2}) is to connect the $L^{1}$ norm of the random sums to a certain average of the square of random Euler products. We define for all $s$ with $\mathfrak{Re}(s)>0$ and integers $0\le k\le \log \log x -\log \log R$, the random Euler products
\begin{equation}\label{eqn: Fk}
 F_{ k}^{(R)}(s) : = \prod_{R\le p\le x^{e^{-(k+1)}}} (1- \frac{f(p)}{p^{s}})^{-1} = \sum_{\substack{n\ge 1 \\ p|n\implies R\le  p\le x^{e^{-(k+1)}}}} \frac{f(n)}{n^{s}}.    
\end{equation}
We also write 
\begin{equation}\label{eqn: F}
    F^{(R)}(s): = \prod_{R\le p\le x} (1- \frac{f(p)}{p^{s}})^{-1} = \sum_{\substack{n\ge 1 \\ p|n\implies R\le  p\le x}} \frac{f(n)}{n^{s}}.  
\end{equation}
We use the notation $\|X\|_{2q}: = (\E[|X|^{2q}])^{\frac{1}{2q}}$ for a random variable $X$. 
\begin{proposition}\label{Prop: upper L2}
Let $f(n)$ be a Steinhaus random multiplicative function and $x$ be large. Let $F_k^{(R)}(s)$ be defined as in \eqref{eqn: Fk} and $\log \log R \ll (\log \log x)^{\frac{1}{2}}$. Set $\mathcal{K}: =\lfloor  \log \log \log x \rfloor$. Then uniformly for all $1/2 \le q\le 9/10$, we have 
\begin{equation}\label{eqn: qbound}
 \|\sum_{n\in \AR} f(n)\|_{2q} \le \sqrt{\frac{x}{\log x}} \sum_{0\le k \le \mathcal{K} } \Big\|\int_{-1/2}^{1/2} | F_{ k}^{(R)}(\frac{1}{2} - \frac{k}{\log x}+it)|^{2}dt\Big\|_q^{\frac{1}{2}} + \sqrt{\frac{x}{\log x}}.     
\end{equation}
\end{proposition}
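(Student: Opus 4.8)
The plan is to follow the reduction of \cite{HarperLow}, adapted to the $R$-rough setting, which passes from a random multiplicative sum to an integral of the associated random Euler product. First I would split $\sum_{n\in\mathcal{A}_R(x)}f(n)$ according to the dyadic-in-the-exponent size of the largest prime factor: for $0\le k\le\mathcal{K}$ let $\Sigma_k$ be the contribution of those $n$ with $x^{e^{-(k+1)}}<P^+(n)\le x^{e^{-k}}$, and let $\mathcal{E}$ collect the remaining ``ultra-smooth'' $n\in\mathcal{A}_R(x)$, namely those with $P^+(n)\le x^{e^{-\mathcal{K}-1}}$ (together with $n=1$). Writing $n=ab$, where $a$ is the $x^{e^{-(k+1)}}$-smooth part of $n$ and $b$ is the complementary factor (so $b>1$ and every prime dividing $b$ lies in $(x^{e^{-(k+1)}},x^{e^{-k}}]$), complete multiplicativity gives $f(n)=f(a)f(b)$, and since the prime ranges $[R,x^{e^{-(k+1)}}]$ and $(x^{e^{-(k+1)}},x^{e^{-k}}]$ are disjoint the families $\{f(p):R\le p\le x^{e^{-(k+1)}}\}$ and $\{f(p):x^{e^{-(k+1)}}<p\le x^{e^{-k}}\}$ are independent; hence $\Sigma_k=\sum_b f(b)\sum_{a\le x/b}f(a)$ with the inner sum running over exactly the integers in the support of the Dirichlet series of $F_k^{(R)}$.

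The tail $\mathcal{E}$ is handled crudely: since $q\le 9/10<1$ we have $\|\mathcal{E}\|_{2q}\le\|\mathcal{E}\|_2=\#\{n\in\mathcal{A}_R(x):P^+(n)\le x^{e^{-\mathcal{K}-1}}\}^{1/2}\le\Psi(x,x^{e^{-\mathcal{K}-1}})^{1/2}$, and de Bruijn's estimate for smooth numbers with $u=e^{\mathcal{K}+1}\asymp\log\log x$ gives $\Psi(x,x^{e^{-\mathcal{K}-1}})\ll x\rho(u)\ll x\exp(-c\log\log x\cdot\log\log\log x)\ll x/\log x$, which is precisely why the cut-off is placed at $\mathcal{K}=\lfloor\log\log\log x\rfloor$. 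For each $k$ with $0\le k\le\mathcal{K}$, I would estimate the inner sum $\sum_{a\le x/b}f(a)$ by a Parseval/Perron-type identity as in \cite{HarperLow}, bounding it --- up to a contribution from large ordinates $|t|>1/2$ and from the Perron truncation, which has to be shown to be of strictly lower order --- by $\sqrt{x/b}$ times an $L^2$-average of $|F_k^{(R)}|$ over $|t|\le 1/2$, on the line $\Re s=\tfrac12-\tfrac{k}{\log x}$ slightly to the left of the critical line, as dictated by the Perron step. Reassembling the $b$-sum by the orthogonality relation $\mathbb{E}[f(b)\overline{f(b')}]=\mathbf{1}_{b=b'}$, together with the independence above and Mertens' estimate $\sum_{x^{e^{-(k+1)}}<p\le x^{e^{-k}}}1/p\asymp 1$ (which keeps the total $b$-weight bounded), I obtain $\|\Sigma_k\|_{2q}\ll\sqrt{x/\log x}\,\big\|\int_{-1/2}^{1/2}|F_k^{(R)}(\tfrac12-\tfrac{k}{\log x}+it)|^2\,dt\big\|_q^{1/2}$. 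Finally, since $2q\ge 1$, Minkowski's inequality lets me sum over $0\le k\le\mathcal{K}$ and add the tail, yielding \eqref{eqn: qbound}.

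The main obstacle, I expect, is the Parseval/Perron step and the bookkeeping of its normalisation: one must show that the passage from the discrete inner sum to the Euler-product integral over $|t|\le 1/2$ loses only the factor $\sqrt{x/\log x}$ and not the trivial $\sqrt{x}$, which forces one to prove that the contributions of large ordinates $|t|>1/2$, of the Perron truncation, and of those $a$ whose largest prime factor is close to $P^+(n)$ are all genuinely smaller; the scale $k=0$, where $x/b$ can be as small as $O(1)$, needs a little extra care. A secondary technical point is that $\{f(b)\}$ over composite $b$ supported in a single prime interval is not an independent family, so the step that pulls $f(b)$ out has to be carried out at the level of second moments, where the orthogonality relation above is all one needs, rather than by applying a Khintchine-type inequality to the $b$-sum directly. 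None of this is conceptually new relative to \cite{HarperLow}; the work lies in verifying that every error term stays below $\sqrt{x/\log x}$ uniformly for $\log\log R\ll(\log\log x)^{1/2}$ and $1/2\le q\le 9/10$.
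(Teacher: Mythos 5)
Your overall reduction matches the paper's: decompose by the size of the largest prime factor into $\mathcal{K}+1$ exponential scales, discard the ultra-smooth tail via de Bruijn's bound on $\Psi(x,x^{e^{-\mathcal{K}-1}})$, condition on the small primes and pass to a conditional second moment by orthogonality of $f$ on the top scale, apply Parseval, and sum the scales with Minkowski (legitimate since $2q\ge 1$). However, the crux of the proposition --- obtaining $\sqrt{x/\log x}$ rather than the trivial $\sqrt{x}$ --- is not actually produced by the mechanism you describe. If one applies a per-$b$ Parseval or Perron bound of the form $|\sum_{a\le x/b}f(a)|^2\lesssim(x/b)\int_{-1/2}^{1/2}|F_k^{(R)}|^2\,dt$ and then sums the weight $\sum_b 1/b\asymp 1$ (your Mertens estimate), the result is $\|\Sigma_k\|_{2q}\ll\sqrt{x}\,\|\cdots\|_q^{1/2}$, with the essential $1/\sqrt{\log x}$ lost. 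There are two compounding problems: Parseval (the paper's Lemma~\ref{lem: pars}) is an identity for the aggregate $\int_0^\infty|\sum_{n\le z}a_n|^2 z^{-1-2\sigma}\,dz$, not a pointwise statement in $z$, so it cannot be applied per fixed $b$; and a truncated Perron at height $|t|\le 1/2$ carries an error far too large to be ``of strictly lower order'' for an individual value of $z$.

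What actually produces the $1/\log x$ in the paper is a smoothing-plus-sieve step \emph{before} Parseval. After conditioning, the quantity $\sum_m|\sum_{n\le x/m}f(n)|^2$ is replaced by $\sum_m\frac{X}{m}\int_m^{m(1+1/X)}|\sum_{n\le x/t}f(n)|^2\,dt$ at scale $X=e^{\sqrt{\log x}}$; swapping the sum and integral, the coefficient $\sum_{t/(1+1/X)\le m\le t,\ p|m\Rightarrow p\in I_k}\frac{X}{m}$ is then bounded by $\ll 1/\log t$ via a sieve that removes all primes in $[2,t^{1/10}]\setminus I_k$. Your estimate $\sum_{p\in I_k}1/p\asymp 1$ enters this sieve only through the innocuous factor $\prod_{p\in I_k}(1-1/p)^{-1}\asymp 1$; the dominant saving $1/\log t$ comes from sieving out the primes \emph{outside} $I_k$. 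Only after substituting $z=x/t$ and using $\log(x/z)\gtrsim z^{-2k/\log x}\log x$ does one reach $\left(\frac{x}{\log x}\right)^q\int_1^{x/x_{k+1}}|\sum_{n\le z}f(n)|^2\frac{dz}{z^{2-2k/\log x}}$, to which the exact Parseval identity is finally applied. That sieve/local-density step --- not Mertens, and not a per-$b$ Perron bound --- is the source of the $\sqrt{x/\log x}$, and it is absent from your sketch.
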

We remind the readers that the upper bound we aim for in Theorem~\ref{thm: upper} is very close to $\sqrt{x/\log R}$. The second term in \eqref{eqn: qbound} is harmless since $\log R$ is much smaller than $\log x$.

The second step deals with the average of the square of random Euler products in \eqref{eqn: qbound}, which lies at the heart of the proof.

\begin{proposition}\label{Prop: mean }
Let $F_k^{(R)}(s)$ be defined as in \eqref{eqn: Fk} and $\log \log R \ll (\log \log x)^{\frac{1}{2}}$. Then for all $0\le k \le \mathcal{K}=\lfloor  \log \log \log x \rfloor$, and uniformly for all $1/2\le q\le 9/10$, we have 
\[
\E\left[\left(\int_{-\frac{1}{2}}^{\frac{1}{2}} |F_{ k}^{(R)}(\frac{1}{2} - \frac{k}{\log x} + it)|^{2}dt\r)^{q}\r] \ll e^{-\frac{k}{2}}\cdot  \left(\frac{\log x}{\log R} \r)^{q} \Big ( \frac{\log \log R + \log \log \log x}{\sqrt{\log \log x}} \Big)^{q} .
\]
\end{proposition}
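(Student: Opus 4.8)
The strategy is Harper's barrier method from \cite{HarperLow}, with the barrier recalibrated to the truncated range of primes $[R,x^{e^{-(k+1)}}]$. Throughout fix $k$ and write $s=\tfrac12-\tfrac{k}{\log x}+it$. First I would factor the Euler product in \eqref{eqn: Fk} as $F_k^{(R)}(s)=\prod_{k+1\le j\le J}P_j(s)$ over the prime blocks $I_j=(x^{e^{-(j+1)}},x^{e^{-j}}]$, with $J:=\lfloor\log\log x-\log\log R\rfloor$, and for each $t$ introduce the random walk $V_m(t):=\sum_{k+1\le j\le m}\log|P_j(s)|$; because $\log|1-w|^{-1}=\mathrm{Re}(w)+O(|w|^2)$, up to the negligible contribution of prime squares this equals $\sum_{R\le p\le x^{e^{-(m+1)}}}\mathrm{Re}(f(p)/p^{s})$, a sum of independent, mean-zero increments of variance $\asymp 1$ per block. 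The good event $\mathcal{G}(k)$ will require $V_m(t)\le\beta(m)$ for every $m$ and every $t$ in a net of $[-\tfrac12,\tfrac12]$ fine enough to control $|F_k^{(R)}(s)|$, so that at level $m$ the constraint is effectively imposed at $\asymp e^{-m}\log x$ points (roughly $\log R$ of them at the finest level $m=J$); here $\beta$ is a barrier that is linear in $m$ with slope matching the drift produced below, carries the customary $-\tfrac34\log$ correction needed for the ballot estimate, and — this is the one genuinely new ingredient — has an additive offset of size $\asymp\log\log R+\log\log\log x$. I also keep the single-$t$ sub-events $\mathcal{G}(k,t)$, so that $\mathcal{G}(k)\subseteq\mathcal{G}(k,t)$ and $\P(\mathcal{G}(k)^c)\le\sum_{t\text{ in the net}}\P(\mathcal{G}(k,t)^c)$.

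Next I would split, using that $\mathbf 1_{\mathcal{G}(k)}$ takes values in $\{0,1\}$,
\[
\E\Big[\Big(\int_{-1/2}^{1/2}|F_k^{(R)}(s)|^2\,dt\Big)^q\Big]=\E\Big[\mathbf 1_{\mathcal{G}(k)}\Big(\int_{-1/2}^{1/2}|F_k^{(R)}(s)|^2\,dt\Big)^q\Big]+\E\Big[\mathbf 1_{\mathcal{G}(k)^c}\Big(\int_{-1/2}^{1/2}|F_k^{(R)}(s)|^2\,dt\Big)^q\Big].
\]
In the first term I move $\mathbf 1_{\mathcal{G}(k)}$ into the integral, bound $\mathbf 1_{\mathcal{G}(k)}\le\mathbf 1_{\mathcal{G}(k,t)}$, and apply Jensen (valid since $q\le1$) to reach $\big(\int_{-1/2}^{1/2}\E[\mathbf 1_{\mathcal{G}(k,t)}|F_k^{(R)}(s)|^2]\,dt\big)^q$. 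For each fixed $t$ I would then perform the Girsanov-type change of measure in which the weight $|F_k^{(R)}(s)|^2$ is absorbed into the law of the $f(p)$: under the tilted law $\widetilde{\P}_t$ the increment $\mathrm{Re}(f(p)/p^{s})$ acquires mean $p^{-2\mathrm{Re}(s)}\asymp 1/p$, producing a drift of $\asymp 1$ per block, and $\E[\mathbf 1_{\mathcal{G}(k,t)}|F_k^{(R)}(s)|^2]=\E[|F_k^{(R)}(s)|^2]\cdot\widetilde{\P}_t(\mathcal{G}(k,t))$. Here $\E[|F_k^{(R)}(s)|^2]=\prod_{R\le p\le x^{e^{-(k+1)}}}(1-p^{-2\mathrm{Re}(s)})^{-1}\asymp e^{-k}\tfrac{\log x}{\log R}$, since $\sum_{R\le p\le x^{e^{-(k+1)}}}p^{-2\mathrm{Re}(s)}=\log\log x-k-\log\log R+O(1)$, while $\widetilde{\P}_t(\mathcal{G}(k,t))$ is the probability that a Gaussian-type walk with that drift stays below $\beta$ throughout $\asymp\log\log x$ blocks, which by the ballot/random-walk estimates of \S\ref{sec: ballot} is $\ll(\log\log R+\log\log\log x)/\sqrt{\log\log x}$. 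For the second term, Hölder with exponents $\tfrac1q,\tfrac1{1-q}$ gives the bound $\P(\mathcal{G}(k)^c)^{1-q}\big(\E[\int_{-1/2}^{1/2}|F_k^{(R)}(s)|^2\,dt]\big)^q\ll\P(\mathcal{G}(k)^c)^{1-q}\big(e^{-k}\tfrac{\log x}{\log R}\big)^q$, so it remains to show $\P(\mathcal{G}(k)^c)$ is small, say $\ll(\log\log x)^{-5}$; this follows from the union bound over the net together with Markov's inequality applied to suitable exponential moments of the partial sums $V_m(t)$, the offset in $\beta$ being exactly what makes the per-point failure probabilities outweigh the number of net points at every level. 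Combining the two terms and using $e^{-kq}\le e^{-k/2}$ for $q\ge\tfrac12$ yields the stated estimate, uniformly for $1/2\le q\le 9/10$ (the exponent $q$ enters only through Jensen and Hölder, not the probabilistic input).

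The step I expect to be the main obstacle is the design of the barrier $\beta$. Its offset must be small enough that $\widetilde{\P}_t(\mathcal{G}(k,t))$ really is of order $(\log\log R+\log\log\log x)/\sqrt{\log\log x}$ — not $\asymp 1$ — so that the conditional second moment beats the trivial bound $e^{-k}\log x/\log R$; but it must simultaneously be at least of order $\log\log R+\log\log\log x$, so that $\P(\mathcal{G}(k)^c)$ stays negligible after the union bound over the $\asymp\log R$ distinct constraints at the finest level (and over $0\le k\le\mathcal{K}$). Reconciling these opposing demands is possible only when $\log\log R=o(\sqrt{\log\log x})$, which is where the threshold comes from (cf. Remark~\ref{rem: why}). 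A secondary, more routine technicality is making the Gaussian comparison for the walks $V_m(t)$, and the replacement of the integral over $[-\tfrac12,\tfrac12]$ by a discrete net, uniform in $t$ and $m$; the associated error terms (prime powers, Taylor remainders in $\log|1-w|^{-1}$, and the oscillation of $P_j(s)$ across a net interval) are all of lower order and do not interact with the barrier choice.
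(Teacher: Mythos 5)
Your proposal follows essentially the same route as the paper: a two-level barrier event $\mathcal{G}(k)$ built on a discretization of $[-\tfrac12,\tfrac12]$, the split $\E[(\cdot)^q]=\E[\mathbf 1_{\mathcal{G}(k)}(\cdot)^q]+\E[\mathbf 1_{\mathcal{G}(k)^c}(\cdot)^q]$, Jensen/H\"older plus a Girsanov tilt and the ballot estimate for the good part, and H\"older with exponents $\tfrac1q,\tfrac1{1-q}$ combined with a union bound plus a Chebyshev/second-moment estimate for the bad part, with the offset $C(x)\asymp\log\log R+\log\log\log x$ being exactly what balances the two demands. The only inaccuracy worth flagging is your mention of ``the customary $-\tfrac34\log$ correction'' to the barrier: the paper deliberately does \emph{not} implement this sharpening (it sets $h(j)=0$ throughout), and precisely because it forgoes it, it pays the price of the additive $100\log\log\log x$ in $C(x)$ --- see Remark~\ref{rem: 100}; including both the log-correction and the extra $\log\log\log x$ offset would be redundant, and one should choose one or the other.
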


\bigskip 

\begin{proof}[Proof of Theorem~\ref{thm: upper} assuming Proposition~\ref{Prop: upper L2} and Proposition~\ref{Prop: mean }]
Apply Proposition~\ref{Prop: upper L2} and Proposition~\ref{Prop: mean } with $q=\frac{1}{2}$. Notice that when  $\log \log R \ll (\log \log x)^{1/2}$, the term $\sqrt{\frac{x}{\log x}}$ in \eqref{eqn: qbound} is negligible and we complete the proof. 
\end{proof}

\section{Probabilistic preparations}\label{S: prep}
In this section, we state some probabilistic results that we need to use later. The proof can be found in \cite{HarperLow} (with at most very mild straightforward modification). 
\subsection{Mean square calculation}
We first state results on mean square calculations.

\begin{lemma}\label{lem: mean square}
Let $f$ be a Steinhaus random multiplicative function. Then for any $400<x\le y$ and $\sigma>-1/\log y$, we have
\begin{equation}\label{eqn: meansquare}
    \E[\prod_{x<p\le y }|1-\frac{f(p)}{p^{\frac{1}{2}+\sigma }}|^{-2}] = \exp\Big( \sum_{x<p\le y} \frac{1}{p^{1+2\sigma}} + O(\frac{1}{\sqrt{x}\log x}) \Big).
\end{equation}
\end{lemma}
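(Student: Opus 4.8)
The plan is to exploit the independence of the values $f(p)$ to reduce to a single-prime computation, then pass to logarithms and Taylor expand. Since $f$ is a Steinhaus random multiplicative function, the $f(p)$ over distinct primes $p$ are independent, so the expectation in \eqref{eqn: meansquare} factors as a finite product
\[
\E\Big[\prod_{x<p\le y}\Big|1-\frac{f(p)}{p^{1/2+\sigma}}\Big|^{-2}\Big] = \prod_{x<p\le y}\E\Big[\Big|1-\frac{f(p)}{p^{1/2+\sigma}}\Big|^{-2}\Big].
\]
For a fixed prime set $z:=p^{-1/2-\sigma}\in\R_{>0}$; the hypotheses $x>400$ and $\sigma>-1/\log y$ force $z\le e\,p^{-1/2}\le e\,x^{-1/2}<1$, uniformly in $p$ and $\sigma$ (for $\sigma\ge 0$ this is clear, and for $-1/\log y<\sigma<0$ one uses $p^{-\sigma}<p^{1/\log y}\le y^{1/\log y}=e$). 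Writing $f(p)=e^{i\theta}$ with $\theta$ uniform on $[0,2\pi)$ and expanding absolutely convergent geometric series,
\[
\big|1-z e^{i\theta}\big|^{-2}=\frac{1}{(1-ze^{i\theta})(1-ze^{-i\theta})}=\sum_{j,k\ge 0}z^{j+k}e^{i(j-k)\theta},
\]
so integrating term by term and using $\frac{1}{2\pi}\int_0^{2\pi}e^{i(j-k)\theta}\,d\theta=\mathbf{1}_{j=k}$ gives
\[
\E\Big[\Big|1-\frac{f(p)}{p^{1/2+\sigma}}\Big|^{-2}\Big]=\sum_{j\ge 0}z^{2j}=\frac{1}{1-z^{2}}=\Big(1-\frac{1}{p^{1+2\sigma}}\Big)^{-1}.
\]

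It then remains to analyze $\log\prod_{x<p\le y}(1-p^{-1-2\sigma})^{-1}$. By the Taylor expansion of $-\log(1-u)$,
\[
\log\prod_{x<p\le y}\Big(1-\frac{1}{p^{1+2\sigma}}\Big)^{-1}=\sum_{x<p\le y}\frac{1}{p^{1+2\sigma}}+\sum_{x<p\le y}\sum_{m\ge 2}\frac{1}{m}\,\frac{1}{p^{m(1+2\sigma)}},
\]
and the first sum is exactly the claimed main term. For the tail, I would note that for $p\le y$ and $\sigma>-1/\log y$ one has $p^{-2\sigma}\le y^{2/\log y}=e^{2}$, hence $p^{-(1+2\sigma)}\le e^{2}/p\le e^{2}/x<1/2$; therefore $\sum_{m\ge 2}\frac1m p^{-m(1+2\sigma)}\ll p^{-2(1+2\sigma)}\ll p^{-2}$, and summing over primes $p>x$ contributes $\ll\sum_{n>x}n^{-2}\ll 1/x\ll 1/(\sqrt{x}\log x)$. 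Exponentiating gives \eqref{eqn: meansquare}.

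I do not expect a serious obstacle; the computation is routine once independence is invoked. The only point needing a little care is the uniformity of the error term over the full range $\sigma>-1/\log y$, which is precisely what the bound $p^{-2\sigma}\le e^{2}$ (valid because the primes in play satisfy $p\le y$) delivers, together with the hypothesis $x>400$ that keeps both $z$ and $p^{-(1+2\sigma)}$ comfortably bounded away from $1$.
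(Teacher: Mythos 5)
Your proof is correct and takes essentially the approach the paper indicates: the paper does not spell out an argument but cites Harper's Lemma 1 and notes it is ``basically using the Taylor expansion and the orthogonality deduced from the definition of a Steinhaus random multiplicative function,'' which is precisely your independence $\to$ geometric-series $\to$ orthogonality $\to$ per-prime closed form $\to$ log Taylor expansion route. Your error bound $O(1/x)$ is in fact slightly sharper than the stated $O(1/(\sqrt{x}\log x))$, so the lemma follows a fortiori.
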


The proof is basically using the Taylor expansion and the orthogonality deduced from the definition of a Steinhaus random multiplicative function. See \cite[Lemma 1, and (3.1)]{HarperLow}.

We also quote the following result on two-dimensional mean square calculations. This will be used in proving the lower bound in Theorem~\ref{thm: lower}. 
\begin{lemma}\label{lem: 2meansquare}
   Let $f$ be a Steinhaus random multiplicative function. Then for any $400<x\le y$ and $\sigma>-1/\log y$, we have
 \begin{equation}\label{eqn: 4.2}
    \E[\prod_{x<p\le y }|1-\frac{f(p)}{p^{\frac{1}{2}+\sigma }}|^{-2} |1-\frac{f(p)}{p^{\frac{1}{2}+\sigma+it}}|^{-2} ] = \exp\left( \sum_{x<p\le y} \frac{2+2\cos(t\log p)}{p^{1+2\sigma}} + O(\frac{1}{\sqrt{x}\log x}) \r). 
\end{equation}  
Moreover, if $x>e^{1/|t|}$, then we further have 
\begin{equation}\label{eqn: 4.3}
   = \exp\Big( \sum_{x<p\le y} \frac{2}{p^{1+2\sigma}} + O(1) \Big) .  
\end{equation}
\end{lemma}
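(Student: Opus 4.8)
The plan is to exploit the independence of the values $f(p)$ across distinct primes. The expression inside the expectation in \eqref{eqn: 4.2} is a product over $x<p\le y$ whose $p$-th factor depends only on $f(p)$, so the expectation factors as a product over primes, each factor finite because $p^{-1/2-\sigma}<1$ throughout the stated range (from $\sigma>-1/\log y$ and $x>400$ one gets, say, $p^{-1/2-\sigma}\le p^{-1/3}$). Everything then reduces to evaluating, for a single prime, $\E_z\big[|1-az|^{-2}\,|1-ae^{-i\theta}z|^{-2}\big]$, where $z$ is uniform on the unit circle, $a:=p^{-1/2-\sigma}$ and $\theta:=t\log p$.

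For this one-prime quantity I would write $g(z):=\big((1-az)(1-ae^{-i\theta}z)\big)^{-1}=\sum_{n\ge0}c_nz^n$ with $c_n=a^n(1+e^{-i\theta}+\dots+e^{-in\theta})$, and apply Parseval on the circle: since $g$ is holomorphic in the disc, $\E_z[|g(z)|^2]=\sum_{n\ge0}|c_n|^2$, and summing this geometric-type series gives the closed form $\frac{1+a^2}{(1-a^2)(1-2a^2\cos\theta+a^4)}$ (a residue computation gives the same). Taking logarithms, Taylor expanding, and using $a^2\le e^2/x$ so that every term past the first is $O(a^4)=O(p^{-2-4\sigma})$, one obtains
\[
\log\E_z\big[|1-az|^{-2}\,|1-ae^{-i\theta}z|^{-2}\big]=\frac{2+2\cos(t\log p)}{p^{1+2\sigma}}+O\big(p^{-2-4\sigma}\big).
\]
Summing over $x<p\le y$ and using $\sum_{x<p\le y}p^{-2-4\sigma}\ll 1/x$ then gives \eqref{eqn: 4.2}.

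For the refinement \eqref{eqn: 4.3} it suffices to show that, when $x>e^{1/|t|}$, the oscillatory sum $\sum_{x<p\le y}\cos(t\log p)/p^{1+2\sigma}=\mathfrak{Re}\sum_{x<p\le y}p^{-1-2\sigma+it}$ is $O(1)$; adding $2/p^{1+2\sigma}$ termwise then converts the exponent in \eqref{eqn: 4.2} into $\sum_{x<p\le y}2/p^{1+2\sigma}+O(1)$. I would bound the oscillatory sum by partial summation against $\vartheta(u)=\sum_{p\le u}\log p=u+O\big(u\,e^{-c\sqrt{\log u}}\big)$ (prime number theorem). The main term is $\int_x^y\frac{u^{-1-2\sigma+it}}{\log u}\,du=\int_{\log x}^{\log y}\frac{e^{(it-2\sigma)\xi}}{\xi}\,d\xi$ after substituting $u=e^\xi$, and one integration by parts in $\xi$ shows this is $O(1)$, the boundary and remainder terms each carrying a factor $1/|it-2\sigma|\le 1/|t|$ which is absorbed using $\log x>1/|t|$ (and $|e^{-2\sigma\xi}|\le e^2$ on the range). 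The contribution of the error term in $\vartheta$ is handled by a further integration by parts and is in fact $o(1)$.

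I expect the refinement \eqref{eqn: 4.3} to be the main obstacle: it is the only place genuine cancellation from $p^{it}$ must be extracted, uniformly in the admissible $\sigma$ and in $y$, and the delicate point is keeping the prime-number-theorem error term under control in the partial summation — the main-term exponential integral itself is elementary. By comparison the factorization and the Taylor expansion are routine, and the one-prime evaluation, though easy to botch (it is tempting to drop a factor $a^2$), is a finite computation.
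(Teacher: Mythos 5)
Your proof is correct and follows the same route that the paper invokes by citation to Harper: independence factors the expectation over primes, each single-prime expectation is evaluated by Fourier orthogonality on the circle (your closed form $\tfrac{1+a^2}{(1-a^2)(1-2a^2\cos\theta+a^4)}$ is a clean packaging of that Parseval computation), a logarithmic Taylor expansion isolates $\tfrac{2+2\cos(t\log p)}{p^{1+2\sigma}}$ with a summable $O(p^{-2-4\sigma})$ error, and the refinement \eqref{eqn: 4.3} comes from a strong-form PNT applied through partial summation. One small remark: your $o(1)$ control of the $\vartheta$-error term in the partial summation picks up a factor $|1+2\sigma-it|$, so it is uniform only for $|t|$ bounded; that is all the paper ever needs (it applies this with $|t|\le 1$), but the lemma as stated does not make that restriction explicit.
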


The proof of \eqref{eqn: 4.2} is in \cite[(6)]{HarperLow}. To deduce \eqref{eqn: 4.3}, we only need to show the contribution involves $\cos(t\log p)$ terms are $\ll 1$, which follows from a strong form of prime number theorem. See how it is done in \cite[Lemma 5]{HarperLow} and \cite[Section 6.1]{Harpersuprema}.

\subsection{Gaussian random walks and the ballot problem}\label{sec: ballot}
A key probabilistic result used in Harper's method is the following (modification of) a classical result about Gaussian random walks, which is connected to the ``ballot problem". 

\begin{lemma}[Probability result 1, \cite{HarperLow}]\label{lem: ballot}
    Let $a \ge 1$. For any integer $n > 1$, let $G_1, \dots , G_n$ be independent real
Gaussian random variables, each having mean zero and variance between $1/
20$ and $20$, say. Let
h be a function such that $|h(j)| \le 10 \log j$. Then
\[\P\Big(\sum_{m=1}^{j} G_m \le a + h(j), \quad \forall 1\le j\le n\Big) \asymp \min\{1, \frac{a}{\sqrt{n}}\}.\]
\end{lemma}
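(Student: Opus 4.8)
The plan is to reduce the statement to one about Brownian motion, where the constant‑barrier case is just the reflection principle, and then to show that the slowly varying term $h$, the unequal variances, and the passage from continuous to discrete time all cost only bounded factors.

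\emph{Step 1: reduction to Brownian motion.} Set $\sigma_m^2:=\mathrm{Var}(G_m)\in[1/20,20]$ and $V_j:=\sum_{m\le j}\sigma_m^2$, so that $j/20\le V_j\le 20j$. The vector $(S_1,\dots,S_n)$ with $S_j:=\sum_{m\le j}G_m$ is centred Gaussian with $\mathrm{Cov}(S_i,S_j)=V_{\min(i,j)}$, which is also the covariance of $(B_{V_1},\dots,B_{V_n})$ for a standard Brownian motion $B$; hence these vectors have the same law and
\[
\P\Big(\sum_{m=1}^{j}G_m\le a+h(j)\ \ \forall\,1\le j\le n\Big)=\P\big(B_{V_j}\le a+h(j)\ \ \forall\,1\le j\le n\big).
\]
We may assume $n$ is large, since for bounded $n$ a nondegenerate Gaussian vector assigns positive probability to the nonempty open event in question and $\min\{1,a/\sqrt n\}\asymp 1$ there. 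For $h\equiv 0$ the reflection principle already gives the answer: $\P(\sup_{t\le V_n}B_t\le a)=\P(|B_{V_n}|\le a)=\P(|Z|\le a/\sqrt{V_n})\asymp\min\{1,a/\sqrt n\}$ with $Z\sim N(0,1)$, and $V_n\asymp n$.

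\emph{Step 2: conditioning on the endpoint.} For general $h$ I condition on $S_n=y$; here $S_n\sim N(0,V_n)$ has density $\asymp 1/\sqrt n$ on $|y|\le\sqrt n$ and Gaussian decay beyond. Conditionally on $S_n=y$ the process $(S_j)_{j\le n}$ is a Gaussian bridge, and the reflection principle for the Brownian bridge gives the exact identity $\P(\sup_{[0,V_n]}(\text{bridge }0\to y)\le b)=1-\exp(-2b(b-y)/V_n)\asymp\min\{1,b(b-y)/n\}$ for $b\ge\max\{0,y\}$. To pass from this to our quantity one replaces the continuous supremum by the values at the sampled times $V_1<\dots<V_n$ (consecutive gaps at most $20$) and the constant barrier $b=a$ by the moving barrier $a+h(j)$; the role of the hypothesis $|h(j)|\le 10\log j$ is that at time $j$ both discrepancies are $O(\log j)=o(\sqrt j)$, hence negligible against the fluctuation scale $\asymp\sqrt j$ of the bridge near time $j$, so each costs only a bounded factor, uniformly for $a\ge 1$ and $y\le a$. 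Granting this and integrating over $y$,
\[
\P\big(S_j\le a+h(j)\ \ \forall\,j\le n\big)\ \asymp\ \int_{-\infty}^{a}\min\Big\{1,\frac{a(a-y)}{n}\Big\}\,\frac{e^{-y^2/(2V_n)}}{\sqrt{2\pi V_n}}\,\d y .
\]
Splitting at $|y|=\sqrt n$: when $1\le a\le\sqrt n$ the range $y\in[-\sqrt n,a]$ contributes $\asymp\frac1{\sqrt n}\cdot\frac an\,(a+\sqrt n)^2\asymp a/\sqrt n$, the Gaussian tail $y<-\sqrt n$ contributes $\ll a/\sqrt n$, and the matching lower bound $\gg a/\sqrt n$ already comes from $y\in[-\sqrt n,-\tfrac12\sqrt n]$; when $a>\sqrt n$ the integrand is $\asymp 1/\sqrt n$ on a window of length $\asymp\sqrt n$, so the integral is $\asymp 1$ (and trivially $\le 1$). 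In every case this is $\asymp\min\{1,a/\sqrt n\}$, as required.

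\emph{The main obstacle.} The only genuine work is the claim used in Step 2: that the moving barrier $a+h(j)$ and the discrete sampling may be traded for the constant barrier $a$ and the continuous supremum at bounded multiplicative cost, uniformly down to $a=1$ (where $a+h(j)$ can even dip below $0$). I would prove this by sandwiching between the barriers $a\pm 10\log j$ and decomposing $\{1,\dots,n\}$ into dyadic blocks $[2^i,2^{i+1})$: on the $i$-th block the barrier differs from $a$ by $O(i)=o(2^{i/2})$, and the sampled maximum differs from the continuous one by a further $O(\sqrt i)$; feeding these bounded per‑block distortions into the bridge estimate and summing the geometrically decaying errors leaves the order $\min\{1,a/\sqrt n\}$ undisturbed. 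This barrier‑robustness step — exactly where the shape $|h(j)|\le 10\log j$, rather than a much larger function, is used — is the crux; everything else is the reflection principle together with the elementary Gaussian integral above.
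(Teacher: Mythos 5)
The paper does not prove this lemma: it quotes it verbatim from Harper \cite{HarperLow} (``Probability result 1''), so there is no in-paper argument to compare against. Your overall strategy --- embed the walk into Brownian motion via the covariance structure, condition on the endpoint, invoke the Brownian-bridge reflection formula, then argue that the discrete sampling and the moving barrier only cost bounded multiplicative factors --- is a sensible route, and your integration over the endpoint $y$ in Step 2 is carried out correctly once the bridge estimate is granted.

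The genuine gap is in the step you yourself flag as the crux, and the justification you sketch would not close it. You write that the discrepancies at time $j$ are ``$O(\log j)=o(\sqrt j)$, hence negligible against the fluctuation scale $\asymp\sqrt j$''; this is only an asymptotic statement and it is badly false for small $j$. With variances near the permitted lower bound $1/20$, the walk at time $j$ has standard deviation $\sqrt{j/20}$, so at $j\approx e^{2}$ a barrier shift of size $10\log j\approx 19.5$ is on the order of $30$ standard deviations, not a small perturbation. Worse, for $a=1$ and $h(j)=-10\log j$ the barrier $a+h(j)$ is already negative at $j=2$, whereas the bridge reflection identity $\P(\sup_{[0,T]}(\text{bridge }0\to y)\le b)=1-e^{-2b(b-y)/T}$ that your argument leans on requires $b\ge\max\{0,y\}$ and simply does not apply. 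In this regime the asserted lower bound $\gg a/\sqrt{n}$ holds only with an extremely small (though fixed) implied constant, carried entirely by a finite initial transient in which the walk must be tens of standard deviations below its mean; your dyadic account, which treats every block's distortion as a small, geometrically decaying multiplicative error, cannot produce such a constant. To repair the argument you would have to treat a bounded initial segment separately --- where $10\log j$ and $\sqrt{j}$ are comparable and the cost is a fixed but not small factor, and where the barrier may be negative --- and only use the $i/2^{i/2}$ summability on the tail blocks. As written, the barrier-robustness claim is asserted rather than proved, and the heuristic offered in its support is incorrect precisely where the statement is delicate.
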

Without the term $h(j)$, it is a classical result and actually that is all we need in this paper. However, we state this stronger form as the $h(j)$ term can be crucial if one wants to remove the $ \log \log \log x$ factor in Theorem~\ref{thm: upper}. We expect the random sum is fluctuating on the order of $\sqrt{j}$ (up to step $j$) and so the above result is expected to be true. The quantity $h(j)$ is much smaller compared to $\sqrt{j}$ so it is negligible in computing the probability. 

We do not directly use the above lemma. We shall use an analogous version for random Euler products (Proposition~\ref{prop 5}). We do the 
Girsanov-type calculation in our study (an analogue of Girsanov’s theorem from
the theory of Gaussian random variables). As in \cite{HarperLow}, we introduce the probability measure (here $x$ is large and $|\sigma|\le 1/100$, say)
\[\tilde{\P}(A) : = \frac{\E[1_A \prod_{p\le x^{1/e}} |1-\frac{f(p)}{p^{\frac{1}{2}+\sigma}}|^{-2}  ]}{\E[\prod_{p\le x^{1/e}} |1-\frac{f(p)}{p^{\frac{1}{2}+\sigma}}|^{-2}]} .\]
For each $\ell \in \mathbb{N} \cup \{0\}$, we denote the $\ell$-th increment of the Euler product 
\begin{equation}\label{eqn: I_l}
   I_{\ell}(s):= \prod_{x^{e^{-(\ell+2)}}<p\le x^{e^{-(\ell+1)}} } (1-\frac{f(p)}{p^{s}})^{-1}.  
\end{equation}
Since we are restricted to $R$-rough numbers $n$,  the parameter $\ell$ lies in the range $0\le \ell\le \log \log x - \log \log R$. All the remaining setup is exactly the same as in \cite{HarperLow}. 

\begin{proposition}\label{prop 5}
There is a large natural number $B$ such that the following is true.
Let $n\le \log \log x - \log \log R - (B+1)$, and define the decreasing sequence $(\ell_j)_{j=1}^{n}$ of non-negative integers by $\ell_j = \lfloor \log \log x -\log \log R \rfloor -(B+1) - j$. Suppose that $|\sigma|\le \frac{1}{e^{B+n+1}}$, and
that $(t_j)_{j=1}^{n}$ is a sequence of real numbers satisfying $|t_j|\le \frac{1}{j^{2/3} e^{B+j+1}}$ for all $j$.

Then uniformly for any large a and any function $h(n)$ satisfying $|h(n)| \le 10 \log n$, and with $I_{\ell}(s)$ defined as in \eqref{eqn: I_l}, we have  
\[\tilde{\P}(-a -Bj \le \sum_{m=1}^{j} \log |I_{\ell_m} (\frac{1}{2}+\sigma + it_m)| \le a + j + h(j), \quad \forall j\le n ) \asymp \min\{1, \frac{a}{\sqrt{n}}\} .\]
\end{proposition}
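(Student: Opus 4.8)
The plan is to deduce Proposition~\ref{prop 5} from the Gaussian ballot estimate Lemma~\ref{lem: ballot} by a Girsanov-type change of measure that linearizes the logarithms of the Euler factors $I_{\ell_m}$. First I would expand each $\log|I_{\ell_m}(\tfrac12+\sigma+it_m)|$ via the Taylor series of $\log|1-z|^{-1}$; the leading term is $\sum_{p}\mathfrak{Re}(f(p)p^{-1/2-\sigma-it_m})$, which under the tilted measure $\tilde{\P}$ (whose weight $\prod_{p\le x^{1/e}}|1-f(p)p^{-1/2-\sigma}|^{-2}$ shifts the distribution of the $f(p)$) has a nonzero mean of size $\sum_{x^{e^{-(\ell_m+2)}}<p\le x^{e^{-(\ell_m+1)}}} p^{-1-2\sigma}\cos(0)\approx 1$ — this is exactly where the ``$+j$'' drift term in the barrier comes from. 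After centering, the partial sums $\sum_{m=1}^j \big(\log|I_{\ell_m}|-\mathbb{E}_{\tilde\P}\log|I_{\ell_m}|\big)$ behave, on the relevant scales, like a sum of independent random variables $G_m$ with variances bounded above and below by absolute constants (the variance of $\mathfrak{Re}\sum_p f(p)p^{-1/2-\ldots}$ over a dyadic-in-the-exponent block of primes is $\asymp 1$ by Mertens). The constant $B$ is chosen large enough that the blocks $\ell_j=\lfloor\log\log x-\log\log R\rfloor-(B+1)-j$ are sufficiently thick (each contains $\gg e^{B}$ worth of prime mass in the sense of $\sum 1/p$) for these variance bounds and the Gaussian approximation to be valid.

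The key steps, in order, are: (i) record the tilted moment generating function / cumulant estimates for $\log|I_{\ell_m}|$ under $\tilde\P$, showing mean $=1+O(\text{small})$ and variance in $[1/20,20]$, with higher cumulants negligible — this uses Lemma~\ref{lem: mean square} and its standard refinements, together with the constraints $|\sigma|\le e^{-(B+n+1)}$ and $|t_j|\le j^{-2/3}e^{-(B+j+1)}$ which guarantee $p^{-2\sigma}=1+O(1/e^{B})$ and $\cos(t_m\log p)=1+O(1/e^{B})$ throughout block $\ell_m$, so the drift is genuinely $\approx 1$ per step and the $t_m$-dependence is a harmless perturbation absorbable into $h$; (ii) invoke a quantitative CLT / coupling (as in \cite{HarperLow}) to replace $(\log|I_{\ell_m}|-1)_m$ by independent Gaussians $(G_m)$ with the stated variance range, at the cost of errors that fit inside the $\pm Bj$ and $h(j)$ slack in the barrier; (iii) rewrite the event $-a-Bj\le\sum_{m\le j}\log|I_{\ell_m}|\le a+j+h(j)$ as a two-sided barrier $\sum_{m\le j}G_m\in[-a-B'j,\,a+h'(j)]$ after subtracting the drift, and (iv) apply Lemma~\ref{lem: ballot} (and its one-sided/lower variants) to get $\asymp\min\{1,a/\sqrt n\}$. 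The lower-barrier side $-a-Bj\le\sum\log|I_{\ell_m}|$ is the easy direction since $-Bj$ grows linearly while the walk fluctuates on scale $\sqrt j$; the substantive constraint is the upper barrier $\le a+j+h(j)$, which after removing the drift $j$ becomes exactly the classical ballot-type barrier $\le a+h(j)$ handled by Lemma~\ref{lem: ballot}.

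I expect the main obstacle to be step (ii): making the passage from the true (tilted) distribution of the increments $\log|I_{\ell_m}|$ to independent Gaussians sufficiently precise that the ballot probability is preserved up to the claimed $\asymp$ constant, uniformly in $a$ and in the admissible $(t_j),\sigma$. One must control not just one-dimensional marginals but the joint law of all partial sums, and the independence is only approximate (successive blocks $\ell_m$ are disjoint sets of primes, so the $f(p)$ are genuinely independent, but the tilt by $\prod_{p\le x^{1/e}}|\cdots|^{-2}$ couples everything and must be handled by conditioning block-by-block). The standard device, which I would follow from \cite{HarperLow}, is to prove matching upper and lower bounds: for the upper bound, drop the lower barrier and some of the upper-barrier constraints and use a union bound together with Gaussian tail estimates; for the lower bound, restrict to a sub-event where the walk stays comfortably inside the barrier with a definite margin, on which the Radon–Nikodym derivative between the true and Gaussian laws is $1+o(1)$, and apply Lemma~\ref{lem: ballot} to that margin-shrunk barrier. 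The bookkeeping of how the error terms $O(1/e^{B})$ per step, the $t_m$-perturbations, and the Taylor-remainder terms all fit within the $Bj$ and $h(j)=O(\log j)$ allowances is routine but is the part that needs care; since the paper explicitly says this setup is ``exactly the same as in \cite{HarperLow}'', I would present the reduction and then cite \cite{HarperLow} for the detailed verification rather than reproduce it.
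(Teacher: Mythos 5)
Your proposal follows essentially the same approach as the paper: linearize $\log|I_{\ell_m}|$ under the tilted measure, verify that the increments have mean $\approx 1$ and variance in $[1/20,20]$ using the constraints on $\sigma$ and $t_j$, absorb the drift into the barrier, and apply Lemma~\ref{lem: ballot}. The paper itself gives only the heuristic (mean $\approx\sum_{p}1/p\approx1$, variance $\asymp1$) and then cites \cite[Proposition 5]{HarperLow} for the full deduction, noting the sole modification is replacing the range condition $n\le\log\log x-(B+1)$ by $n\le\log\log x-\log\log R-(B+1)$, which is exactly the reduction-then-cite structure you propose.
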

One may view the above sum approximately as a sum of $j$ independent random variables and each with mean $\approx \sum_{x^{e^{-(\ell+2)}}<p\le x^{e^{-(\ell+1)}} } \frac{1}{p} \approx 1$ and with constant variance between $1/20$ and $20$. This shows the connection to Lemma~\ref{lem: ballot}. The deduction of Proposition~\ref{prop 5} from Lemma~\ref{lem: ballot} can be found in the proof of \cite[Proposition 5]{HarperLow}. The only modification is changing the upper bound restriction from $n \le \log \log x -(B+1)$ to $n \le \log \log x - \log \log R -(B+1)$ and all conditions remaining are
satisfied.

\section{Proof of Proposition~\ref{Prop: upper L2}}\label{S: uppereuler}
The proof follows closely to the proof of \cite[Proposition 1]{HarperLow}. 
For any integer $0\le k \le \mathcal{K}= \lfloor  \log \log \log x \rfloor$, let 
\begin{equation}
  I_k: =(x_{k+1}, x_{k}] :=  (x^{e^{-(k+1)} } , x^{e^{-k}}].  
\end{equation}
Let $P(n)$ be the largest prime factor of $n$. For simplicity, we use $\sumstar_n$ to denote the sum where the variable $n$ is $R$-rough.
By using Minkowski's inequality (as $2q\ge 1$), 
\begin{equation}\label{eqn: triangle}
  \|\sum_{n\in \AR } f(n)\|_{2q} \le
\sum_{0\le k \le \mathcal{K}}\|\sumstar_{\substack{n\le x \\ P(n)\in I_k}} f(n)\|_{2q} + \|\sumstar_{\substack{n\le x\\P(n)\le x^{e^{-(\mathcal{K}+1)}}}} f(n)\|_{2q}
.  
\end{equation}
We first bound the last term by only using the smoothness condition and it is bounded by 
$\le \Psi (x, x^{1/\log \log x}  )^{\frac{1}{2}} \ll \sqrt{x} (\log x)^{-c\log \log \log  x}$, which is acceptable. Here $\Psi(x, y)$ denotes the number of positive integers up to $x$ with no primes bigger than $y$ and the estimate is standard, see \cite{Gran08}.
The main contribution to the upper bound in \eqref{eqn: triangle} can be written as
\[ = \sum_{0\le k \le \mathcal{K}} \| \sum_{\substack{m\le x \\ p|m \implies p \in I_k}}f(m) \sumstar_{\substack{n\le x/m \\ n~\text{is $x_{k+1}$-smooth}}} f(n) \|_{2q}.
\]
We now condition on $f(p)$ for $p$ small but at least $R$. Write $\E^{(k)}$ to denote the expectation conditional on $(f(p))_{p\le x_{k+1}}$. Then the above is
\[
\begin{split}
    & = \sum_{0\le k \le \mathcal{K}} (\E\E^{(k)} [|\sum_{\substack{m\le x\\ p|m \implies p \in I_k}} f(m) \sumstar_{\substack{n\le x/m\\ n~\text{is $x_{k+1}$-smooth}}} f(n)|^{2q}])^{1/2q} \\
    & \le \sum_{0\le k \le \mathcal{K}} (\E[(\E^{(k)} [|\sum_{\substack{m\le x\\ p|m \implies p \in I_k}}f(m) \sumstar_{\substack{n\le x/m\\ n~\text{is $x_{k+1}$-smooth}}} f(n)|^{2}])^{q}])^{1/2q}\\
    & = \sum_{0\le k \le \mathcal{K}} ( \E[( \sum_{\substack{m\le x \\ p|m \implies p\in I_k} }  |\sumstar_{\substack{n\le x/m\\ n~\text{is $x_{k+1}$-smooth} }} f(n)|^{2})^{q}] )^{\frac{1}{2q}}.
\end{split}
\]
Then we only need to show that for each expectation in the sum, it is bounded as in \eqref{eqn: qbound}. We next replace the discrete mean value with a smooth version, i.e. we want to replace the sum with some integral. Set $X=e^{\sqrt{\log x}}$, and we have the expectation involving primes in $I_k$ is 
\begin{equation}\label{eqn: tri}
 \begin{split}
&\ll \E\left[\left(\sum_{\substack{m\le x \\ p|m \implies p\in I_k} } \frac{X}{m} \int_{m}^{m(1+\frac{1}{X})} |\sumstar_{\substack{n\le x/t\\ n~\text{is $x_{k+1}$-smooth} }} f(n)|^{2} dt \r)^{q}\r]  \\
& + \E\left[\left( \sum_{\substack{m\le x \\ p|m \implies p\in I_k} } \frac{X}{m} \int_{m}^{m(1+\frac{1}{X})} |\sumstar_{\substack{x/t \le n\le x/m\\n~\text{is $x_{k+1}$-smooth} }} f(n)|^{2} dt \r)^{q} \r].  
\end{split}   
\end{equation}
By using H\"{o}lder's inequality, we upper bound the second term in \eqref{eqn: tri} by the $q$-th power of
\begin{equation}\label{eqn: q}
    \sum_{\substack{m\le x \\ p|m \implies p\in I_k} } \frac{X}{m} \int_{m}^{m(1+\frac{1}{X})} \E [|\sumstar_{\substack{x/t \le n\le x/m\\n~\text{is $x_{k+1}$-smooth} }} f(n)|^{2}] dt . 
\end{equation}
Do a
mean square calculation (analogous to  \eqref{eqn: meansquare}) and throw away the restriction on the $R$-rough numbers. Then \eqref{eqn: q} is at most $\ll  2^{-e^{k}} x/\log x$ and thus the second term in \eqref{eqn: tri} is $\ll(2^{-e^{k}} x/\log x)^{q}$. Summing over $k\le \mathcal{K}$, this is acceptable and thus we only need to focus on the first term in \eqref{eqn: tri}. By swapping the summation, it is at most
\[\E \left[ \left( \int_{x_{k+1}}^{x} |\sumstar_{\substack{n\le x/t \\ n~\text{is $x_{k+1}$-smooth}  }}f(n) |^{2} \sum_{\substack{t/(1+1/X)\le m \le t\\ p|m \implies p\in I_k  }} \frac{X}{m} dt\r)^{q} \r]. \]
We upper bound the sum over $m$ by using a simple sieve argument (sieving out all primes in $[2, t^{1/10}]\backslash I_k$) to derive that the above is at most 
\[\E\left[ \left ( \int_{x_k}^{x} |\sumstar_{\substack{n\le x/t \\ n~\text{is $x_{k+1}$-smooth}  }}f(n) |^{2} \frac{dt}{\log t}\r)^{q} \r] = x^{q} \E\left[ \left ( \int_{1}^{x/x_{k+1}} |\sumstar_{\substack{n\le z \\ n~\text{is $x_{k+1}$-smooth}  }}f(n)|^{2} \frac{dz}{z^{2}\log(\frac{x}{z})} \r)^{q} \r], \]
where in the equality above we used the substitution $z: =x/t$. 
A simple calculation shows that we can replace $\log(x/z)$ by $\log x$ without much loss. Indeed, if $z\le \sqrt{x}$ then $\log(x/z)\gg \log x$; if $\sqrt{x}\le z \le x/x_{k+1}$ then $\log (x/z) \ge z^{-2k/\log x}\log x$.  Thus, we further have the bound
\begin{equation}\label{eqn: pars}
  \ll \left(\frac{x}{\log x}\r)^{q} \E \left[ \left( \int_{1}^{x/x_{k+1}} |\sumstar_{\substack{n\le z \\ n~\text{is $x_{k+1}$-smooth} }}f(n)|^{2} \frac{dz}{z^{2-2k/\log x}} \r)^{q} \r] .  
\end{equation}
To this end, we apply the following version of Parseval's identity, and its proof can be found in \cite[(5.26) in Sec 5.1]{MV2007}.

\begin{lemma}[{\cite[Harmonic Analysis Result 1]{HarperLow}}]\label{lem: pars}
    Let $(a_n)_{n=1}^{\infty}$ be any sequence of complex numbers, and let $A(s): = \sum_{n=1}^{\infty} \frac{a_n}{n^{s}}$ denote the corresponding Dirichlet series, and $\sigma_c$ denote its abscissa of convergence. Then for any $\sigma> \max\{0, \sigma_c\}$, we have 
    \[\int_{0}^{\infty} \frac{|\sum_{n\le x}a_n|^{2}}{x^{1+2\sigma}}dx = \frac{1}{2\pi} \int_{-\infty}^{+\infty} \Big|\frac{A(\sigma + it)}{\sigma + it}\Big|^{2} dt.  \]
\end{lemma}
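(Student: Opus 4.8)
The plan is to recognize this identity as Plancherel's theorem in disguise, reached by a logarithmic change of variables. Write $S(x):=\sum_{n\le x}a_n$, so that $S(x)=0$ for $0<x<1$. A preliminary step is the elementary growth bound: since $A(s)$ converges for every real $s>\sigma_c$, Abel summation against $n^{-\beta}$ (with $\sigma_c<\beta$) shows that $S(x)\ll_{\beta}x^{\beta}$, hence $S(x)\ll_{\delta}x^{\max\{0,\sigma_c\}+\delta}$ for every $\delta>0$.

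Next I would record the Mellin-type integral representation. By partial summation, for $\Re(s)>\max\{0,\sigma_c\}$ one has
\[A(s)=s\int_{1}^{\infty}S(x)\,x^{-s-1}\,dx=s\int_{0}^{\infty}S(x)\,x^{-s-1}\,dx,\]
and the integral converges absolutely as soon as $\delta$ is taken smaller than $\Re(s)-\max\{0,\sigma_c\}$. Dividing by $s$, setting $s=\sigma+it$, and substituting $x=e^{u}$ yields
\[\frac{A(\sigma+it)}{\sigma+it}=\int_{-\infty}^{\infty}S(e^{u})\,e^{-\sigma u}\,e^{-itu}\,du=\widehat{h}(t),\]
where $h(u):=S(e^{u})e^{-\sigma u}$ and $\widehat{h}$ denotes its Fourier transform. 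Since $h$ is supported on $[0,\infty)$ and $h(u)\ll e^{(\max\{0,\sigma_c\}+\delta-\sigma)u}$ with negative exponent, we have $h\in L^{1}(\R)\cap L^{2}(\R)$; in particular the displayed formula for $\widehat{h}$ holds pointwise and $\widehat{h}\in L^{2}(\R)$.

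Finally I would apply Plancherel's theorem, $\int_{\R}|h(u)|^{2}\,du=\frac{1}{2\pi}\int_{\R}|\widehat{h}(t)|^{2}\,dt$, and undo the substitution $x=e^{u}$ on the left-hand side:
\[\int_{\R}|S(e^{u})|^{2}e^{-2\sigma u}\,du=\int_{0}^{\infty}\frac{|S(x)|^{2}}{x^{1+2\sigma}}\,dx.\]
Combining the last three displays gives the claimed identity, and also shows in passing that both sides are finite in the stated range of $\sigma$.

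The only genuine work here is analytic bookkeeping: justifying the partial-summation identity together with the absolute convergence of the Mellin integral, and checking the integrability of $h$ so that Plancherel may be applied with the pointwise Fourier formula rather than merely as an $L^{2}$-limit. Both rest on the growth bound $S(x)\ll_{\delta}x^{\max\{0,\sigma_c\}+\delta}$; once that is in hand, everything else is routine. Since the statement is entirely classical, in the paper one simply quotes \cite[Sec.~5.1]{MV2007} rather than reproducing this argument.
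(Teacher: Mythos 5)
Your proof is correct, and it is the standard Mellin--Plancherel argument that underlies the cited formula \cite[(5.26), Sec.\ 5.1]{MV2007}; the paper itself does not reprove the lemma but simply quotes that reference, as you note. The only tiny imprecision is the intermediate claim that Abel summation gives $S(x)\ll_\beta x^{\beta}$ for every $\beta>\sigma_c$: when $\sigma_c<\beta<0$ one only gets $S(x)\ll 1$, but your final growth bound $S(x)\ll_\delta x^{\max\{0,\sigma_c\}+\delta}$ is exactly what is true and is all that the argument uses, so the rest goes through unchanged.
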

Apply Lemma~\ref{lem: pars} and the expectation in \eqref{eqn: pars} is
\[ =
\E\left[ \left (\int_{-\infty}^{+\infty} \frac{|\fkr(\frac{1}{2}-\frac{k}{\log x} +it)|^{2}}{|\frac{1}{2}-\frac{k}{\log x} +it|^{2}} dt\r)^{q} \r] \le \sum_{n\in \Z} \E\left[\left( \int_{n-\frac{1}{2}}^{n+\frac{1}{2}} \frac{|\fkr(\frac{1}{2}-\frac{k}{\log x} +it)|^{2}}{|\frac{1}{2}-\frac{k}{\log x} +it|^{2}} dt \r)^{q}\r].
\]
Since $f(m)m^{it}$ has the same law as $f(m)$ for all $m$, for any fixed $n$ we have
\[\E\left[\left(\int_{n-\frac{1}{2}}^{n+\frac{1}{2}} |\fkr(\frac{1}{2}-\frac{k}{\log x} +it)|^{2} dt \r)^{q}\r] = \E\left[ \left(\int_{-\frac{1}{2}}^{\frac{1}{2}} |\fkr(\frac{1}{2}-\frac{k}{\log x} +it)|^{2} dt \r)^{q}\r]. \]
For $n-1/2\le t\le n+1/2$, we have 
$1/|\frac{1}{2}-\frac{k}{\log x} +it|^{2}\asymp 1/(1+n^{2})$ which is summable over $n$. 
We complete the proof by inserting the above estimates into \eqref{eqn: pars}.

\section{Proof of Proposition~\ref{Prop: mean }}\label{s: upperpro}

This is the key part of the proof that reveals how $\log \log R \approx \sqrt{\log \log x}$ could become the transition range. 
We begin with a discretization process which is the same as in \cite{HarperLow}.  For each $|t|\le \frac{1}{2}$, set $t(-1)=t$, and then iteratively for each $0\le j \le \log(\log x /\log R) -2$ define 
\[t(j): = \max\{u\le t(j-1): u = \frac{n}{((\log x) /e^{j+1})\log ((\log x) /e^{j+1})}\text{~for some $n\in \Z$}\}. \]
By the definition, we have \cite[(4.1)]{HarperLow}
\[|t-t(j)|\le \frac{2}{((\log x) /e^{j+1})\log ((\log x)/e^{j+1})}.\]
Given this notation, let $B$ be the large fixed natural number from Proposition~\ref{prop 5}. Let $\mathcal{G}(k)$ denote the event that for all $|t|\le \frac{1}{2}$ and for all $k\le j \le \log \log x - \log \log R -B -2$, we have
\begin{equation}\label{eqn: Gk}
   (\frac{\log x}{e^{j+1} \log R} e^{C(x)}  )^{-1} \le \prod_{\ell = j }^{\lfloor \log \log x -\log \log R \rfloor-B-2}
|I_{\ell}(\frac12-\frac{k}{\log x} +it(\ell)) | \le \frac{\log x}{e^{j+1}\log R} e^{C(x)}, 
\end{equation}
where notably, our $C(x)$ is chosen as 
% \begin{equation}\label{eqn: g(x)}
%     g(x, j):= C(x) + 2\log \log (\frac{\log x}{e^{j+1}\log R })
% \end{equation}
% and 
\begin{equation}\label{eqn: C(x)}
 C(x):=\log \log R + 100 \log \log \log x. 
\end{equation}
We shall establish the following two key propositions. The first proposition says that when we are restricted to the good event $\mathcal{G}(k)$, the $q$-th moment is small. 
\begin{proposition}\label{key prop 1}
Let $x$ be large and $\log \log R \ll (\log \log x)^{\frac{1}{2}}$. Let 
$C(x)$ be defined as in \eqref{eqn: C(x)}. Let $F_k^{(R)}$ be defined as in \eqref{eqn: Fk} and $\mathcal{G}(k)$ be defined as in \eqref{eqn: Gk}. For all $0\le k \le \mathcal{K} = \lfloor \log \log \log x\rfloor$ and $1/2\le q \le 9/10$, we have 
\[\E\left[\left(\int_{-\frac{1}{2}}^{\frac{1}{2}} \mathbf{1}_{\mathcal{G}(k)} |F_{ k}^{(R)}(\frac{1}{2} - \frac{k}{\log x} + it)|^{2}dt\r)^{q}\r] \ll \left( \frac{\log x }{e^{k}\log R}  \r)^{q}\Big ( \frac{C(x)}{\sqrt{\log \log x}} \Big)^{q}. \]
\end{proposition}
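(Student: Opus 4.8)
The plan is to follow Harper's strategy for conditioning on a barrier event, now adapted to the $R$-rough setting with the new choice of $C(x)$. Write $s_k = \tfrac12 - \tfrac{k}{\log x} + it$ for brevity. On the event $\mathcal{G}(k)$ the increments $I_\ell$ are pinned between $(\tfrac{\log x}{e^{j+1}\log R}e^{C(x)})^{\pm 1}$ along the whole ``tail'' $j \le \ell \le \lfloor \log\log x - \log\log R\rfloor - B - 2$, and hence so is the product $F_k^{(R)}(s_k)$ up to an $O(1)$ factor coming from the initial segment of primes in $[R, x^{e^{-(\lfloor\cdots\rfloor - B - 1)}}]$ and from the discretization $t(\ell)\leadsto t$. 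So the first step is a decomposition: partition $|t|\le\tfrac12$ according to the largest $j=j(t)$ for which the barrier in \eqref{eqn: Gk} is \emph{almost} saturated, i.e.\ $\prod_{\ell=j}^{\cdots}|I_\ell(s_k)| > \tfrac12\cdot\tfrac{\log x}{e^{j+1}\log R}e^{C(x)}$ (and is not that large for any smaller index). On that piece one bounds $|F_k^{(R)}(s_k)|^2 \ll (\tfrac{\log x}{e^{j+1}\log R})^2 e^{2C(x)} \cdot |F_{\text{low}}|^2$, where $F_{\text{low}}$ is the Euler product over $p \le x^{e^{-(j+1)}}$ (roughly; one must be careful that on this piece we only control the product from $j$ onward, while the product over the smaller primes is still free).

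The second step is to estimate the measure of the $t$ for which $j(t)$ takes a given value. This is exactly where Proposition~\ref{prop 5} enters: the condition ``the partial product from index $j$ up to the top is $\asymp$ a prescribed size $e^{a}$'' translates, after taking logarithms and using the Girsanov change of measure $\tilde{\P}$, into a ballot-type event for a Gaussian-like random walk of $n \asymp \log\log x - \log\log R - j$ steps with a barrier at height $\asymp C(x)$. Proposition~\ref{prop 5} gives that the $\tilde{\P}$-probability of staying under the barrier for all intermediate steps is $\asymp \min\{1, C(x)/\sqrt{n}\}$, and the probability of the walk ending near height $a$ while obeying the barrier contributes a further $e^{-a}$-type factor from the Gaussian density. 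Converting back from $\tilde\P$ to $\P$ and then to an expectation over $t$, one finds that $\E\big[\int_{j(t)=j}\mathbf 1_{\mathcal G(k)}|F_k^{(R)}(s_k)|^2\,dt\big]$ is bounded by $\tfrac{\log x}{e^{j}\log R}\cdot \tfrac{C(x)}{\sqrt{\log\log x - \log\log R}}$ up to constants (the $e^{C(x)}$ and the $(\tfrac{\log x}{e^{j+1}\log R})^2$ from the pointwise bound get knocked down to a single power by the $e^{-a}$ weight, the mean-square normalization from Lemma~\ref{lem: mean square}, and the length of the $t$-interval $\asymp e^{j+1}\log R/\log x$). Summing the geometric series over $j \ge k$ gives the claimed $(\tfrac{\log x}{e^k\log R})\,(C(x)/\sqrt{\log\log x})$ for $q=1$; the general $1/2\le q\le 9/10$ version follows either by the same argument carried through with the $q$-th power (Hölder on the pieces, since there are only $O(\log\log x)$ of them so the number of pieces costs at most a harmless power), or by interpolating.

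The main obstacle I expect is twofold. First, the barrier event $\mathcal{G}(k)$ as defined controls only the product over primes \emph{above} $x^{e^{-(j+1)}}$, so on the piece $j(t)=j$ one still has to handle the unconstrained product over $R \le p \le x^{e^{-(j+1)}}$; this is dealt with by taking conditional expectations over the small primes (as in the proof of Proposition~\ref{Prop: upper L2}) and using Lemma~\ref{lem: mean square}, but one must check that the normalization works out to exactly $\tfrac{\log x}{e^{j}\log R}$ and not something off by a power of $\log\log$. Second, and more seriously, is verifying that the choice $C(x) = \log\log R + 100\log\log\log x$ makes the ballot estimate \emph{efficient}: the factor $\min\{1, C(x)/\sqrt{n}\}$ with $n \asymp \log\log x$ must come out as $C(x)/\sqrt{\log\log x}$, which requires $C(x) \ll \sqrt{\log\log x}$, and this is precisely why the hypothesis $\log\log R \ll (\log\log x)^{1/2}$ is imposed — the $100\log\log\log x$ summand is negligible against it. Keeping track of where the $h(j)$-type error in Proposition~\ref{prop 5} (and the discretization error $|t - t(\ell)|$) feeds in, and confirming it stays below the barrier height, is the delicate bookkeeping that the whole argument rests on.
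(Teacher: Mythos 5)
Your proposal correctly identifies the two key tools — the Girsanov-type change of measure $\tilde{\P}$ and the ballot estimate of Proposition~\ref{prop 5} — but the level-set decomposition by a ``saturation index'' $j(t)$ is not how the paper proceeds, and as sketched it would not go through with the ingredients available. The paper's proof is much more direct: first apply Jensen/H\"older to reduce to the case $q=1$, i.e.\ to bounding $\E\big[\GK\int_{-1/2}^{1/2}|F^{(R)}_k|^2\,dt\big]$; then, since $\GK\le\GKT$ for each fixed $t$ (the global event implies each pointwise one), pull the expectation inside the integral; then use translation invariance of the law of $f(n)\mapsto f(n)n^{it}$ to rewrite everything at the single point $t=0$ with a shifted barrier event $\HKT$; and finally apply Proposition~\ref{prop 5} \emph{once}, which gives $\E[\HKT|F^{(R)}_k(\tfrac12-\tfrac{k}{\log x})|^2] = \tilde{\P}(\mathcal H(k,t))\cdot\E[|F^{(R)}_k|^2]\ll \min\{1,a/\sqrt n\}\cdot\tfrac{\log x}{e^k\log R}$ with $a=C(x)+B+1$ and $n\asymp\log\log x$. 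No decomposition over a saturation level is needed.

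The concrete gap in your route is the step where you invoke ``the probability of the walk ending near height $a$ \ldots contributes a further $e^{-a}$-type factor from the Gaussian density.'' Proposition~\ref{prop 5} bounds only the \emph{barrier-crossing} probability $\tilde{\P}(\text{walk stays below }a+j+h(j)\text{ for all }j\le n)\asymp\min\{1,a/\sqrt n\}$; it says nothing about conditioning the endpoint to lie near the ceiling, which is a strictly finer estimate that would require a separate argument (a conditioned ballot / bridge estimate). Without it, your pointwise bound $|F^{(R)}_k|^2\ll (\tfrac{\log x}{e^{j+1}\log R})^2 e^{2C(x)}|F_{\rm low}|^2$ carries a dangling $e^{2C(x)}$ that never cancels — indeed, on $\GK$ this pointwise bound already holds at $j=k$ regardless of any saturation level, so the decomposition does not sharpen it. The whole point of the change of measure is that the weight $|F^{(R)}_k|^2$ is absorbed into $\tilde{\P}$ \emph{before} any pointwise bound is taken, so the $e^{C(x)}$ never appears; your version reintroduces it and then relies on an unproved ``$e^{-a}$'' density factor to remove it.
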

The second proposition is to show that indeed $\mathbf{1}_{\mathcal{G}(k)}$ happens with high probability. 

\begin{proposition}\label{key prop 2}
Let $\mathcal{G}(k)$ be defined as in \eqref{eqn: Gk}. For all $0\le k \le \mathcal{K}= \lfloor \log \log \log x\rfloor$ and uniformly for all $1/2\le q \le 9/10$ and
$C(x)$ defined in \eqref{eqn: C(x)}, we have
\[\P(\mathcal{G}(k)~\text{fails}) \ll e^{-C(x)}.    \] 
\end{proposition}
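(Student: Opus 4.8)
The plan is to run the barrier-estimate (union bound plus Markov) argument of \cite{HarperLow}, with care on one point: the number of test points $t$ at which the barrier \eqref{eqn: Gk} is checked should be allowed to depend on the level $j$, and it is this level-dependence that makes the resulting sum over $j$ converge and that fixes the exponent $100$ in \eqref{eqn: C(x)}. Write $L:=\lfloor\log\log x-\log\log R\rfloor-B-2$ and $s_k:=\tfrac12-\tfrac{k}{\log x}$. The hypothesis $\log\log R\ll\sqrt{\log\log x}$ guarantees $L\ge k$ for all $k\le\mathcal K$, so the range of $j$ in \eqref{eqn: Gk} is nonempty. For an integer $k\le j\le L$ set $U_j:=\frac{\log x}{e^{j+1}\log R}\,e^{C(x)}$ and let $E_j$ be the event that \eqref{eqn: Gk} fails at this particular $j$, i.e.\ that $\prod_{\ell=j}^{L}|I_\ell(s_k+it(\ell))|\notin[U_j^{-1},U_j]$ for some $|t|\le\tfrac12$. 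Since $\mathcal G(k)$ fails exactly when some $E_j$ occurs, it suffices to show $\sum_{j=k}^{L}\P(E_j)\ll e^{-C(x)}$.

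First I would discretize. For fixed $j$ the product $\prod_{\ell=j}^{L}|I_\ell(s_k+it(\ell))|$ depends on $t$ only through the tuple $(t(j),\dots,t(L))$, and by the recursive definition of $t(\cdot)$ the entry $t(j)$ determines all later ones; as $t$ runs through $[-\tfrac12,\tfrac12]$ the value $t(j)$ ranges over a grid of spacing $\asymp 1/\big((\log x/e^{j+1})\log(\log x/e^{j+1})\big)$, hence takes at most $\ll(\log x/e^{j})\log\log x$ values. Thus $E_j$ is a union of at most $\ll(\log x/e^{j})\log\log x$ events of the shape $\{\prod_{\ell=j}^{L}|I_\ell(s_k+i\tau_\ell)|\notin[U_j^{-1},U_j]\}$ with $\tau_j,\dots,\tau_L$ fixed reals.

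For each such fixed tuple I would bound the two one-sided failure probabilities by Markov's inequality. Here $\prod_{\ell=j}^{L}I_\ell(s)=\prod_{x^{e^{-(L+2)}}<p\le x^{e^{-(j+1)}}}(1-f(p)/p^{s})^{-1}$, and since each $f(p)$ is rotation-invariant the per-prime expectation does not see the phases $p^{i\tau_\ell}$; so Lemma~\ref{lem: mean square} (applied with $\sigma=-k/\log x$), together with Mertens' theorem, gives
\[
\E\Big[\prod_{\ell=j}^{L}|I_\ell(s_k+i\tau_\ell)|^{2}\Big]
=\exp\Big(\sum_{x^{e^{-(L+2)}}<p\le x^{e^{-(j+1)}}}\frac1{p^{1+2k/\log x}}+o(1)\Big)\ll\frac{\log x}{e^{j}\log R},
\]
the point being that $\tfrac{2k}{\log x}\log p\le 2k e^{-(k+1)}\ll1$ on this range, so $p^{-2k/\log x}=1+O(\tfrac{k\log p}{\log x})$ and the correction to $\sum_p 1/p$ is $O(1)$; the analogous orthogonality computation, now using $\E\big[|1-f(p)/p^{s}|^{2}\big]=1+p^{-1+2k/\log x}$, gives the matching bound $\E\big[\prod_{\ell=j}^{L}|I_\ell(s_k+i\tau_\ell)|^{-2}\big]\ll\frac{\log x}{e^{j}\log R}$. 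Markov's inequality then bounds the failure probability for a fixed tuple by $\ll U_j^{-2}\cdot\frac{\log x}{e^{j}\log R}\ll\frac{e^{j}\log R}{\log x\,e^{2C(x)}}$. Multiplying by the $\ll(\log x/e^{j})\log\log x$ tuples, the factors $e^{j}$ cancel and $\P(E_j)\ll\frac{\log\log x\,\log R}{e^{2C(x)}}$ uniformly in $j$; summing over the $\ll\log\log x$ admissible $j$ gives $\P(\mathcal G(k)\text{ fails})\ll\frac{(\log\log x)^{2}\log R}{e^{2C(x)}}$. Since $e^{C(x)}=\log R\,(\log\log x)^{100}$ by \eqref{eqn: C(x)}, this is $\ll\frac{1}{\log R\,(\log\log x)^{198}}\ll\frac{1}{\log R\,(\log\log x)^{100}}=e^{-C(x)}$, as required.

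The one genuinely delicate step is the discretization: one must keep the \emph{level-dependent} count $\ll(\log x/e^{j})\log\log x$ of test points instead of the crude global count $\ll\log x\log\log x$, because it is the cancellation of $e^{j}$ between that count and the Markov bound $e^{j}\log R/(\log x\,e^{2C(x)})$ that keeps the sum over $j$ bounded. The rest is routine; the generous constant $100$ (anything exceeding $2$ would already close the estimate) is there only to absorb with room to spare the two spare powers of $\log\log x$ lost to the union bounds over $j$ and over the test points.
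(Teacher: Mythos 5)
Your proof is correct and takes essentially the same route as the paper: a union bound over the levels $j$, discretization to the level-dependent grid $\mathcal{T}(x,j)$, Chebyshev/Markov combined with the mean square bound of Lemma~\ref{lem: mean square}, and finally absorption of the $\log\log x$ losses into the $100\log\log\log x$ slack in $C(x)$. The only cosmetic difference is that the paper splits the failure into two one-sided events $\P_1$, $\P_2$ and treats each in turn, while you fold the two Markov bounds together; the cancellation of $e^{j}$ between the grid size and the mean square estimate, which you flag as the delicate point, is precisely what happens in the paper's line $\P_1\ll\sum_j e^{\log\log R-2C(x)+\log\log(\log x/e^{j+1})}$.
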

The above two key propositions imply Proposition~\ref{Prop: mean }. 
\begin{proof}[Deduction of Proposition~\ref{Prop: mean }]
   According to the good event $\mathcal{G}(k)$ happening or not, we have 
   \[\begin{split}
& \E\left[\left(\int_{-\frac{1}{2}}^{\frac{1}{2}}  |F_{ k}^{(R)}(\frac{1}{2} - \frac{k}{\log x} + it)|^{2}dt\r)^{q}\r] \\ & \le \E\left[\left(\int_{-\frac{1}{2}}^{\frac{1}{2}} \mathbf{1}_{\mathcal{G}(k)} |F_{ k}^{(R)}(\frac{1}{2} - \frac{k}{\log x} + it)|^{2}dt\r)^{q}\r] +   \E\left[\left(\int_{-\frac{1}{2}}^{\frac{1}{2}} \mathbf{1}_{\mathcal{G}(k)\text{fails}} |F_{ k}^{(R)}(\frac{1}{2} - \frac{k}{\log x} + it)|^{2}dt\r)^{q}\r] \\
& 
\le \left( \frac{\log x }{e^{k}\log R}  \r)^{q}\Big ( \frac{C(x)}{\sqrt{\log \log x}} \Big)^{q} +   \left(\int_{-\frac{1}{2}}^{\frac{1}{2}} \E [|F_{ k}^{(R)}(\frac{1}{2} - \frac{k}{\log x} + it)|^{2}]dt \r)^{q}\P(\mathcal{G}(k)~\text{fails})^{1-q},
\end{split}
   \]
  where in the first term we used Proposition~\ref{key prop 1} and we applied H\"older's inequality with exponents $\frac{1}{q}, \frac{1}{1-q}$ to get the second term. We next apply the mean square calculation \eqref{eqn: meansquare} to derive that the above is 
    \[\ll \left( \frac{\log x }{e^{k}\log R}  \r)^{q}\left( \Big ( \frac{C(x)}{\sqrt{\log \log x}} \Big)^{q} +  \P(\mathcal{G}(k)~\text{fails})^{1-q} \r).\]
  Plug in the definition of $C(x)$ and use Proposition~\ref{key prop 2} with $1-q \ge 1/10$ (and then the exceptional probability to the power $1/10$ is negligible) to deduce that the above is
  \[\ll   e^{-k/2}\left( \frac{\log x }{\log R}  \r)^{q} \cdot \Big ( \frac{C(x)}{\sqrt{\log \log x}} \Big)^{q},  \]
  which completes the proof. 
\end{proof}

\begin{remark}\label{rem: why}
We remark that in \eqref{eqn: C(x)}, the quantity $C(x)= \log \log R +100 \log \log \log x$ is different from just being a constant $C$ in \cite{HarperLow}. The reason for our choice of $C(x)$ is the following. Firstly, to keep the $q$-th moment in Proposition~\ref{key prop 1} has a saving (i.e. to make $ \Big ( \frac{C(x)}{\sqrt{\log \log x}} \Big)^{q}$ small), we require that $C(x)= o(\sqrt{\log \log x})$.  Secondly, it turns out that in order to make the exceptional probability in Proposition~\ref{key prop 2} small enough, one has the constraint $\log \log R \ll C(x)$. The combination of the above two aspects together leads to $\log \log R =o(\sqrt{\log \log x})$. 
\end{remark}

\begin{remark}\label{rem: 100}
   In the deduction of Proposition~\ref{Prop: mean }, we did not use an iterative process as used in \cite{HarperLow}. Instead, we added an extra term $100\log \log \log x$ for the purpose of getting strong enough bounds on $\P(\mathcal{G}(k)~\text{fails})$. We simplified the proof by getting a slightly weaker upper bound in Theorem~\ref{thm: upper} as compensation.
\end{remark}

\subsection{Proof of Proposition~\ref{key prop 1}}

The proof of Proposition~\ref{key prop 1} is a simple modification of the proof of Key Proposition 1 in \cite{HarperLow}. We emphasize again that we will use $C(x)$ defined in \eqref{eqn: C(x)} instead of just a constant as in \cite{HarperLow}, and we do not need the extra help from the quantity $h(j)$ which hopefully makes the proof conceptually easier.  

By using H\"older's inequality, it suffices to prove that 
\begin{equation}\label{eqn: Good}
   \E [\GK \int_{-\frac{1}{2}}^{\frac{1}{2}} |F_k^{(R)}(\frac{1}{2}-\frac{k}{\log x} + it)|^{2}dt ]\ll e^{-k}\cdot \frac{\log x}{\log R} \cdot  \frac{C(x)}{\sqrt{\log \log x}} ,  
\end{equation}
uniformly for $0\le k \le \mathcal{K} = \lfloor \log \log \log x \rfloor $.
We can upper bound the left-hand side of \eqref{eqn: Good} by 
\begin{equation}\label{eqn: 5.4}
 \le \int_{-\frac{1}{2}}^{\frac{1}{2}} \E[\GKT |F_k^{(R)}(\frac{1}{2}-\frac{k}{\log x} + it)|^{2}] dt   
\end{equation}
where $\GKT$ is the event that, for any given $t$,  
\[(\frac{\log x}{e^{j+1} \log R} e^{C(x)}  )^{-1} \le \prod_{\ell = j }^{\lfloor \log \log x -\log \log R \rfloor-B-2}
|I_{\ell}(\frac{1}{2}-\frac{k}{\log x} +it(\ell)) | \le \frac{\log x}{e^{j+1}\log R} e^{C(x)}
\]
for all $k\le j \le \log \log x -\log \log R -B -2$. This is an upper bound as $\GK$ is the event of $\GKT$ holds for all $|t|\le \frac{1}{2}$. By the fact that $f(n)$ has the same law as $f(n)n^{it}$, we have
\begin{equation}\label{eqn: H}
  \int_{-\frac{1}{2}}^{\frac{1}{2}} \E[\GKT |F_k^{(R)}(\frac{1}{2}-\frac{k}{\log x} + it)|^{2}] dt = \int_{-\frac{1}{2}}^{\frac{1}{2}} \E[\HKT |F_k^{(R)}(\frac{1}{2}-\frac{k}{\log x} )|^{2} ]dt, 
\end{equation}
where $\HKT$ denotes the event that, for any given $t$,  
\[
    (\frac{\log x}{e^{j+1} \log R} e^{C(x)}  )^{-1} \le \prod_{\ell = j }^{\lfloor \log \log x -\log \log R \rfloor-B-2}
|I_{\ell}(\frac{1}{2}-\frac{k}{\log x} +i(t(\ell)-t)) | \le \frac{\log x}{e^{j+1}\log R} e^{C(x)},
\]
for all $k\le j \le \log \log x- \log \log R - B -2$. 
 We next apply Proposition~\ref{prop 5}. 
% Note that 
%  \[\begin{split}
%  g(x, j) & = C(x) + 2\log \log (\frac{\log x}{e^{j+1}\log R }) \\ 
%  & \le C(x) + 2\log (\lfloor \log \log x -\log \log R - (B+1)-j\rfloor )+ 2\log (B+2).    
%  \end{split}
% \]
It is clear that $\mathcal{H}(k,t)$ is the event treated in Proposition~\ref{prop 5} with $n=\lfloor \log \log x- \log \log R \rfloor -(B+1)-k$; $\sigma= \frac{-k}{\log x}$ and $t_m = t(\lfloor \log \log x- \log \log R \rfloor -(B+1) -m)-t$ for all $m$; and 
\[a = C(x) + B+1, \quad h(j)=0. \]
The parameters indeed satisfy $|\sigma|\le \frac{1}{e^{B+n+1}}$ and $|t_m|\le  \frac{1}{m^{2/3}e^{B+m+1}}$
for all $m$. Apply Proposition~\ref{prop 5} to derive
\[\frac{\E[\HKT |F_k^{(R)}(\frac{1}{2}-\frac{k}{\log x})|^{2}]}{\E[|F_k^{(R)}(\frac{1}{2}-\frac{k}{\log x})|^{2}]} = \tilde{\P}(\mathcal{H}(k,t)) \ll \min\{ 1, \frac{a}{\sqrt{n}}\}.
\]
A simple mean square calculation (see \eqref{eqn: meansquare}) gives that 
\[\E[|F_k^{(R)}(\frac{1}{2}-\frac{k}{\log x})|^{2}] = \exp\left(\sum_{R\le p \le x^{e^{-(k+1)}}} \frac{1}{p^{1-2k/\log x}} +O(1)\r)\ll \frac{\log x}{e^{k}\log R}.\]
Combining the above two inequalities and the relation in \eqref{eqn: H}, we get the desired upper bound for the quantity in \eqref{eqn: 5.4}. Thus, we complete the proof of \eqref{eqn: Good} and Proposition~\ref{key prop 1}.

\subsection{Proof of Proposition~\ref{key prop 2}}
In the proof, we will see why it is necessary to make $C(x)$ large enough compared to $\log \log R$. The proof starts with the union bound. We have 
\[\P(\mathcal{G}(k)~\text{fails}) \le \P_1 +\P_2,\]
where
\[\P_1 = \sum_{k\le j \le \log (\frac{\log x}{\log R}) -B-2} \P\left( \prod_{\ell = j }^{\lfloor \log (\frac{\log x}{\log R}) \rfloor-B-2}
|I_{\ell}(\frac{1}{2}-\frac{k}{\log x} + i t(\ell)) | >\frac{\log x}{e^{j+1}\log R} e^{C(x)}~\text{for some $t$} \r)\]
and 
\[\P_2 = \sum_{k\le j \le \log (\frac{\log x}{\log R}) -B-2} \P\left( \prod_{\ell = j }^{\lfloor \log (\frac{\log x}{\log R}) \rfloor-B-2}
|I_{\ell}(\frac{1}{2}-\frac{k}{\log x} +i t(\ell)) |^{-1} >\frac{\log x}{e^{j+1}\log R} e^{C(x)}~\text{for some $t$} \r),\]
where $|t|\le 1/2$. 
We focus on bounding $\P_1$, and $\P_2$ can be estimated similarly. Replace the set of all $|t|\le 1/2$ by the discrete set 
\[\mathcal{T}(x, j): = \left\{\frac{n}{((\log x)/e^{j+1}) \log ((\log x)/e^{j+1}) }: |n|\le ((\log x)/e^{j+1}) \log ((\log x)/e^{j+1})  \r\},
\]
and apply the union bound to get 
\[\P_1 \le \sum_{\substack{k\le j \le \log (\frac{\log x}{\log R}) -B-2\\ t(j) \in \mathcal{T}(x,j)}} \P\left( \prod_{\ell = j }^{\lfloor \log (\frac{\log x}{\log R}) \rfloor-B-2}
|I_{\ell}(\frac{1}{2}-\frac{k}{\log x} +it(\ell)) | >\frac{\log x}{e^{j+1}\log R} e^{C(x)} \r).\]
By using Chebyshev's inequality this is at most 
\[
\le \sum_{\substack{k\le j \le \log (\frac{\log x}{\log R}) -B-2\\ t(j) \in \mathcal{T}(x,j)}} \frac{1}{(\frac{\log x}{e^{j+1}\log R} e^{C(x)})^{2}} \E[ \prod_{\ell = j }^{\lfloor \log (\frac{\log x}{\log R}) \rfloor-B-2}
|I_{\ell}(\frac{1}{2}-\frac{k}{\log x} +it(\ell)) |^{2}  ].
\]
Since $f(n)$ and $f(n)n^{it}$ have the same law, the above is 
\[\ll \sum_{\substack{k\le j \le \log (\frac{\log x}{\log R}) -B-2}} \frac{| \mathcal{T}(x,j)|}{(\frac{\log x}{e^{j+1}\log R} e^{C(x)})^{2}} \E[ \prod_{\ell = j }^{\lfloor \log (\frac{\log x}{\log R}) \rfloor-B-2}
|I_{\ell}(\frac{1}{2}-\frac{k}{\log x} ) |^{2}  ]. \]
The expectation here is, again through a mean square calculation \eqref{eqn: meansquare}, $\ll \frac{\log x}{e^{j+1}\log R}$. Note $|\mathcal{T}(x, j)| \le ((\log x)/e^{j+1}) \log ((\log x)/e^{j+1}) $. 
We conclude that 
\[ \P_1 \ll \sum_{\substack{k\le j \le \log (\frac{\log x}{\log R}) -B-2}}e^{\log \log R-2C(x) + \log \log ( \log x / e^{j+1})   } \ll e^{-C(x)}, \]
where in the last step we used that $C(x)=   \log \log R + 100 \log \log \log x$. Thus we complete the proof of Proposition~\ref{key prop 2}.

\section{Proof of Theorem~\ref{thm: lower}}\label{s: lowerthm}
 We first notice that if $R>x^{\frac{1}{A}}$ for any fixed large constant $A$, then $\CA_R(x)$ is a set of elements with only $O_A(1)$ number of prime factors. This would immediately imply that $\E[|\sum_{n\in \AR} f(n)|^{4}] \ll_A |\AR|^{2} $ and by \eqref{eqn: 4th}, the conclusion follows. From now on, we may assume that 
\begin{equation}\label{eqn: Rsquare}
R\le x^{\frac{1}{A}}.  
 \end{equation}
The proof strategy of Theorem~\ref{thm: lower} again follows from \cite{HarperLow}. The main differences lie in the design of the barrier events and taking advantage of $R$ being large.  In particular, we do not need a ``two-dimensional Girsanov-type" calculation which makes our proof less technical. 
We first do the reduction step to reduce the problem to understanding certain averages of random Euler products, as in the upper bound proof. 
\begin{proposition}\label{Prop: lower L2} 
There exists a large constant $C$ such that the following is true. Let $V$ be a sufficiently large
fixed constant. Let $x$ be sufficiently large and $\log \log R \gg \sqrt{\log \log x}$. Let $F^{(R)}(s)$ be defined as in \eqref{eqn: F}. Then,
uniformly for all $1/2 \le q\le 9/10$ and any large $V$, we have $\|\sum_{n\in \AR} f(n)  \|_{2q}$
\[ \gg \sqrt{\frac{x}{\log x}}  \left( \Big\|\int_{-\frac{1}{2}}^{\frac{1}{2}} | F^{(R)}(\frac{1}{2} +\frac{4V}{\log x} + it)|^{2} dt\Big\|_q^{\frac{1}{2}}  - \frac{C}{e^{V}} \Big\|\int_{-\frac{1}{2}}^{\frac{1}{2}} | F^{(R)}(\frac{1}{2} +\frac{2V}{\log x} + it)|^{2} dt\Big\|_q^{\frac{1}{2}} -C \right).   \]
\end{proposition}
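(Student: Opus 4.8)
\noindent\emph{Reduction via Parseval.} The plan is to reverse the Parseval step of the upper-bound proof: I will bound $\int_{-1/2}^{1/2}\bigl|F^{(R)}(\tfrac12+\tfrac{4V}{\log x}+it)\bigr|^2\,dt$ from above in $\|\cdot\|_q$, with main term $\tfrac{\log x}{x}\|M_R(x)\|_{2q}^2$ where $M_R(x):=\sum_{n\in\CA_R(x)}f(n)$, and the lower bound for $\|M_R(x)\|_{2q}$ will follow. First observe that $M_R(x)$ is exactly the truncated coefficient sum $\sum_{n\le x}c_n$ of the Dirichlet series $F^{(R)}(s)$ of \eqref{eqn: F}, where $c_n=f(n)$ when every prime factor of $n$ lies in $[R,x]$ and $c_n=0$ otherwise (any $n\le x$ that is $R$-rough is automatically $x$-smooth). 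Put $M_R(t):=\sum_{n\le t}c_n$, so $M_R(t)\equiv 1$ on $[1,R)$ and $M_R(t)=M_R(x)+\sum_{x<n\le t}c_n$ for $t>x$. Since $F^{(R)}$ is a finite Euler product, its Dirichlet series converges absolutely for $\mathfrak{Re}(s)>0$, so Lemma~\ref{lem: pars} applies at $\sigma=\tfrac12+\tfrac{4V}{\log x}$ and, using $|\tfrac12+\tfrac{4V}{\log x}+it|\asymp 1$ on $|t|\le\tfrac12$,
\[
\int_{-1/2}^{1/2}\bigl|F^{(R)}(\tfrac12+\tfrac{4V}{\log x}+it)\bigr|^2\,dt\ \ll\ \int_{-\infty}^{\infty}\frac{|F^{(R)}(\tfrac12+\tfrac{4V}{\log x}+it)|^2}{|\tfrac12+\tfrac{4V}{\log x}+it|^2}\,dt\ \asymp\ \int_0^{\infty}\frac{|M_R(t)|^2}{t^{2+8V/\log x}}\,dt .
\]
The passages between $\int_{-\infty}^{\infty}$ and $\int_{-1/2}^{1/2}$ that occur here and below are dealt with exactly as in \cite{HarperLow}, using that $f(m)$ and $f(m)m^{it}$ have the same law together with the mean-square bounds of Lemma~\ref{lem: mean square}; the extra contributions are $O(1)$ and are absorbed into the additive $C$.

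\noindent\emph{The range $t>x$.} Split the $t$-integral at $t=x$. For $t>x$ write $M_R(t)=M_R(x)+D(t)$ with $D(t)=\sum_{x<n\le t}c_n$, so $|M_R(t)|^2\le 2|M_R(x)|^2+2|D(t)|^2$; the $M_R(x)$ term contributes $\ll|M_R(x)|^2/x\le\tfrac{\log x}{x}|M_R(x)|^2$. For the $D(t)$ term use $t^{-2-8V/\log x}\le e^{-4V}t^{-2-4V/\log x}$ on $t>x$, extend the integral to $(0,\infty)$ (legitimate since $D\equiv 0$ on $(0,x]$), and apply Lemma~\ref{lem: pars} once more, now to $\sum_{n>x}c_n n^{-s}=F^{(R)}(s)-\sum_{n\le x}c_n n^{-s}$ and with $|a-b|^2\le 2|a|^2+2|b|^2$: the outcome is a bound
\[
\ll\ e^{-4V}\int_{-1/2}^{1/2}\bigl|F^{(R)}(\tfrac12+\tfrac{2V}{\log x}+it)\bigr|^2\,dt\ +\ e^{-4V}\int_0^{x}\frac{|M_R(t)|^2}{t^{2+4V/\log x}}\,dt\ +\ O(1),
\]
in which the first summand, after taking $q$-th-moment $\tfrac12$-norms, is dominated by the error term $\tfrac{C}{e^{V}}\|\int_{-1/2}^{1/2}|F^{(R)}(\tfrac12+\tfrac{2V}{\log x}+it)|^2\,dt\|_q^{1/2}$ of the statement, while the second---carrying an $e^{-4V}$---is controlled by the crude bound $\int_0^x t^{-2-4V/\log x}|M_R(t)|^2\,dt\ll\int_{-1/2}^{1/2}|F^{(R)}(\tfrac12+\tfrac{2V}{\log x}+it)|^2\,dt+O(1)$ from Lemma~\ref{lem: pars}.

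\noindent\emph{The range $t\le x$: the crux.} It remains to prove
\[
\Bigl\|\int_0^{x}\frac{|M_R(t)|^2}{t^{2+8V/\log x}}\,dt\Bigr\|_q\ \ll\ \frac{\log x}{x}\,\|M_R(x)\|_{2q}^2\ +\ \frac{C^2}{e^{2V}}\Bigl\|\int_{-1/2}^{1/2}|F^{(R)}(\tfrac12+\tfrac{2V}{\log x}+it)|^2\,dt\Bigr\|_q\ +\ C^2 .
\]
In expectation this is comfortable, with a factor $1/V$ to spare: from $\E|M_R(t)|^2=\#\{n\le t:\ p|n\implies p\ge R\}\asymp t/\log R$ for $t\ge R$ (the sieve bound \eqref{eqn: sieve}, together with $R\le x^{1/A}$ as reduced at the start of Section~\ref{s: lowerthm}, so that $\log(x/R)\asymp\log x$) one gets $\E\int_0^x t^{-2-8V/\log x}|M_R(t)|^2\,dt\asymp\tfrac1V\cdot\tfrac{\log x}{\log R}\asymp\tfrac1V\cdot\tfrac{\log x}{x}\E|M_R(x)|^2$. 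The difficulty, and the step I expect to be the real obstacle, is promoting this first-moment comparison to a $q$-th-moment comparison of the \emph{whole family} of running partial sums $\{M_R(t):t\le x\}$ against the single terminal value $M_R(x)$: the displayed inequality cannot hold pointwise, so one must establish it at the level of $q$-th moments using multiplicativity, following \cite{HarperLow}. Concretely one decomposes $[0,x]$, exploits that $M_R(t)$ is $L^2$-stable over windows of the form $[xe^{-1/V},x]$ near $x$, and reabsorbs the bulk range $t\lesssim x$---via Lemma~\ref{lem: pars} applied to suitable shifted and complementary Dirichlet series, and by conditioning on the primes in an initial segment---into the companion Euler product at the shift $\tfrac12+\tfrac{2V}{\log x}$, losing only an $e^{-V}$-fraction; this loss is exactly why the statement carries the two different shifts $\tfrac{4V}{\log x}$ and $\tfrac{2V}{\log x}$, and a factor $C/e^{V}$ in front of the second norm. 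As remarked in the excerpt, for the roughness restriction this step is lighter than in \cite{HarperLow} and, unlike the upper-bound reduction, needs no ``two-dimensional Girsanov'' calculation.

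\noindent\emph{Assembly.} Combining the three $t$-ranges yields the displayed $\|\cdot\|_q$ bound for $\int_{-1/2}^{1/2}|F^{(R)}(\tfrac12+\tfrac{4V}{\log x}+it)|^2\,dt$. Then $\||M_R(x)|^2\|_q=\|M_R(x)\|_{2q}^2$, the sub-additivity $\|X+Y+Z\|_q^q\le\|X\|_q^q+\|Y\|_q^q+\|Z\|_q^q$ (valid for $0<q<1$), and $\sqrt{a+b+c}\le\sqrt a+\sqrt b+\sqrt c$ turn it into the asserted lower bound for $\|\sum_{n\in\CA_R(x)}f(n)\|_{2q}$, uniformly for $\tfrac12\le q\le\tfrac9{10}$. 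The reduction as a whole closely parallels that of \cite{HarperLow}; the precise threshold $\log\log R\asymp\sqrt{\log\log x}$ is pinned down not here but in the later propositions, notably the correlation estimate of Proposition~\ref{prop: correlation}.
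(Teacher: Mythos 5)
The proposal does not carry out the key step, and the plan it gestures at in its place would not work. You attempt to ``reverse Parseval'' and show
\[
\Bigl\|\int_{-1/2}^{1/2}\bigl|F^{(R)}(\tfrac12+\tfrac{4V}{\log x}+it)\bigr|^2\,dt\Bigr\|_q \ll \frac{\log x}{x}\,\|M_R(x)\|_{2q}^2 + \text{errors},
\]
but you never prove this, and the sketch you offer for it (``decompose $[0,x]$, exploit $L^2$-stability of $M_R(t)$ on $[xe^{-1/V},x]$, reabsorb the bulk'') fails for a structural reason: the contribution to $\int_0^x |M_R(t)|^2 t^{-2-8V/\log x}\,dt$ from $t$ near $x$ is negligible. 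Indeed, the expected contribution from the dyadic block $t\asymp e^j$ is $\asymp e^{-8Vj/\log x}/\log R$, roughly flat over $j\le (\log x)/V$, whereas the window $[xe^{-1/V},x]$ contributes only $\asymp 1/(V\log R)$, a factor $\log x$ smaller than the whole integral. So the integral cannot be ``read off'' from $M_R(x)$ by localization near the endpoint, and no pointwise inequality is available; you acknowledge this is the obstacle but supply no argument that closes it.

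What the paper actually does is different and avoids this issue entirely. It first uses a Rademacher randomization and sub-additivity of $t\mapsto t^q$ (for $2q\le 2$) to show $\E\bigl[|\sum^*_{n\le x,\,P(n)>\sqrt{x}} f(n)|^{2q}\bigr]\le 2\,\E\bigl[|\sum^*_{n\le x} f(n)|^{2q}\bigr]$, reducing the problem to lower-bounding the sum over $n$ with a large prime factor. It then writes that sum as $\sum_{\sqrt{x}<p\le x} f(p)\sum^*_{m\le x/p} f(m)$, conditions on the small primes, and applies Khintchine's inequality to the outer sum of \emph{independent} variables $f(p)$; this converts the $L^{2q}$ lower bound into $\E\bigl[(\sum_{\sqrt{x}<p\le x}|\sum^*_{m\le x/p}f(m)|^2)^q\bigr]$. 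Only then does it smooth, replace the prime sum by an integral over $z=x/t\le\sqrt x$, and apply Parseval to pass to the Euler product at the shift $4V/\log x$, with the $2V/\log x$ shift handling the tail $z>\sqrt x$. The Khintchine step is what produces a genuine lower bound on $\|M_R(x)\|_{2q}$ in terms of a one-sided moment of partial sums; your proposal has no substitute for it. Everything in your write-up before and after the acknowledged gap (the $t>x$ tail, the sub-additivity, the final assembly) is fine in spirit, but the heart of the proposition is missing and your suggested route does not look repairable without in effect rediscovering the conditioning-plus-Khintchine argument.
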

The proof of Proposition~\ref{Prop: lower L2} is in Section~\ref{s: lowereuler}. 
The remaining tasks are to give a desired lower bound on $\|F^{(R)}(\frac{1}{2} +\frac{4V}{\log x} + it)\|_q^{\frac{1}{2}}$ and an upper bound on $\|F^{(R)}(\frac{1}{2} +\frac{2V}{\log x} + it)\|_q^{\frac{1}{2}}$. 
The upper bound part is simple. Indeed, simply apply H\"older's inequality and do a mean square calculation \eqref{eqn: meansquare} to get
\begin{equation}\label{eqn: plain}
\E[(\int_{-\frac{1}{2}}^{\frac{1}{2}}|F^{(R)}(\frac{1}{2} +\frac{2V}{\log x} + it)|^{2}  dt)^{q}] \ll \Big(\int_{-\frac{1}{2}}^{\frac{1}{2}}\E[|F^{(R)}(\frac{1}{2} +\frac{2V}{\log x} + it)|^{2}]  dt \Big)^{q} \ll  \Big(\frac{\log x}{V\log R}  \Big)^{q} . 
\end{equation}

We next focus on the main task, giving a good lower bound on $\|F^{(R)}(\frac{1}{2} +\frac{4V}{\log x} + it)\|_q^{\frac{1}{2}}$. For each $t\in \mathbb{R}$, 
we use $L(t)$ to denote the event that for all $\lfloor \log V \rfloor +3 \le j \le \log \log x - \log \log R -B -2$, the following holds
\begin{equation}\label{eqn: Lt}
  (\frac{\log x}{e^{j+1} \log R} e^{D(x)}  )^{-B} \le \prod_{\ell = j }^{\lfloor \log \log x -\log \log R \rfloor-B-2}
|I_{\ell}(\frac12+\frac{4V}{\log x} +it) | \le \frac{\log x}{e^{j+1}\log R} e^{D(x)},  
\end{equation}
where $D(x):= c\sqrt{\log \log x -\log \log R} $ with 
\begin{equation}\label{eqn: c}
    c= \frac{1}{4} \min\Big\{ \frac{\log \log R}{\sqrt{\log \log x-\log\log R} } , 1 \Big\} \asymp 1.
\end{equation}
We are now ready to define a random set 
\begin{equation}\label{eqn: L}
    \mathcal{L}: = \{-1/2\le t \le 1/2: L(t)~\text{defined by \eqref{eqn: Lt} holds}\}.
\end{equation}
It is clear that
\begin{equation}\label{eqn: onlyrandom}
\E[(\int_{-\frac{1}{2}}^{\frac{1}{2}}|F^{(R)}(\frac{1}{2} +\frac{4V}{\log x} + it)|^{2}  dt)^{q}] \ge \E[(\int_{\mathcal{L}}|F^{(R)}(\frac{1}{2} +\frac{4V}{\log x} + it)|^{2}  dt)^{q}].
\end{equation}
We use the following estimate and defer its proof to Section~\ref{s: lowerprop}.

\begin{proposition}\label{prop: random low}Let $V$ be a large
fixed constant. Let $x$ be sufficiently large and $\log \log R \gg \sqrt{\log \log x}$. Let $F^{(R)}(s)$ be defined as in \eqref{eqn: F}. Let $\mathcal{L}$ be the random set defined in \eqref{eqn: L}. Then uniformly for any $1/2\le q\le 9/10$, we have
\begin{equation}\label{eqn: lowbound}
\E[(\int_{\mathcal{L}}|F^{(R)}(\frac{1}{2} +\frac{4V}{\log x} + it)|^{2}  dt)^{q}] \gg \Big(\frac{\log x}{V\log R}  \Big)^{q} . 
\end{equation}
\end{proposition}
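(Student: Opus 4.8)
The plan is to bound the $q$-th moment from below via a truncated second-moment (Paley--Zygmund) argument applied to the random variable $Y:=\int_{\mathcal L}|F^{(R)}(\tfrac12+\tfrac{4V}{\log x}+it)|^2\,dt$. First I would record the first moment: by Fubini,
\[
\E[Y]=\int_{-1/2}^{1/2}\P(L(t))\,\E\big[|F^{(R)}(\tfrac12+\tfrac{4V}{\log x}+it)|^2\ \big|\ L(t)\big]\,dt,
\]
and, after the usual change of law $f(n)\mapsto f(n)n^{it}$, the conditional expectation is a $\tilde\P$-expectation of the full Euler product $|F^{(R)}|^2$ under the barrier event $\mathcal H$-type condition. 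Using Proposition~\ref{prop 5} (the Euler-product ballot estimate) to evaluate $\P(L(t))\asymp \min\{1,D(x)/\sqrt{n}\}$ together with a Girsanov/mean-square calculation as in Lemma~\ref{lem: mean square}, the barrier on the \emph{upper} side costs a factor $e^{D(x)}$ while $D(x)/\sqrt{\log\log x-\log\log R}=c\asymp 1$, so the barrier does not kill the main term; this gives $\E[Y]\gg \log x/(V\log R)$.

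Next I would estimate the second moment $\E[Y^2]=\iint_{[-1/2,1/2]^2}\E\big[\mathbf 1_{t_1,t_2\in\mathcal L}\,|F^{(R)}(\tfrac12+\tfrac{4V}{\log x}+it_1)|^2|F^{(R)}(\tfrac12+\tfrac{4V}{\log x}+it_2)|^2\big]\,dt_1\,dt_2$. This is exactly the quantity controlled by Proposition~\ref{prop: correlation} (the correlation estimate referenced in the introduction): the point is that the two barrier events $L(t_1),L(t_2)$ force enough decorrelation that the integrand behaves as if the two Euler products were independent once $|t_1-t_2|$ exceeds the relevant scale, so that $\E[Y^2]\ll (\E[Y])^2$ up to a constant, with the off-diagonal range $|t_1-t_2|\le 1/\log R$ (where the products genuinely correlate, via $\sum_{R\le p\le x}(2+2\cos((t_1-t_2)\log p))/p$) contributing only a comparable amount because the extra mass there is offset by the smallness of that range combined with the barrier restriction; here is precisely where $\log\log R\gg\sqrt{\log\log x}$ enters, since we need $e^{2D(x)}\cdot(\text{length }1/\log R)\cdot(\log x/\log R)^2$ to be $\ll(\log x/(V\log R))^2$, i.e. $2D(x)\le \log\log R+O(1)$, which is guaranteed by the choice of $c$ in \eqref{eqn: c}.

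From the two-moment bounds, a standard application of the Paley--Zygmund inequality gives $\P(Y\ge \tfrac12\E[Y])\gg 1$, hence $\E[Y^q]\ge (\tfrac12\E[Y])^q\,\P(Y\ge\tfrac12\E[Y])\gg (\log x/(V\log R))^q$ for all $1/2\le q\le 9/10$, which is \eqref{eqn: lowbound}. I expect the main obstacle to be the second-moment estimate, specifically controlling the near-diagonal contribution $|t_1-t_2|$ small: one must show that conditioning on \emph{both} barrier events still yields a bound of the ideal independent size, and this is where the delicate interplay between the barrier height $D(x)$, the roughness parameter $R$, and the ballot-type probabilities must be balanced — it is the technical core that forces the threshold $\log\log R\gg\sqrt{\log\log x}$ and is handled by invoking Proposition~\ref{prop: correlation} together with Lemma~\ref{lem: 2meansquare}.
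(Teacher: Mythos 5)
Your proposal is correct and follows essentially the same route as the paper: lower-bound $\E[Y]$ via the translation $t\mapsto 0$, Proposition~\ref{prop 5} (giving $\tilde\P(L(0))\gg 1$ since $a/\sqrt n=D(x)/\sqrt{\log\log x-\log\log R}=c\asymp 1$), and the mean-square estimate; upper-bound $\E[Y^2]$ via Proposition~\ref{prop: correlation}; then conclude. The only cosmetic difference is that you phrase the final step via Paley--Zygmund, whereas the paper applies the equivalent Hölder interpolation $\E[Y^q]\ge(\E Y)^{2-q}/(\E[Y^2])^{1-q}$ directly --- both yield $\gg(\log x/(V\log R))^q$ from the same two moment bounds.
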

Plug \eqref{eqn: onlyrandom}, \eqref{eqn: lowbound} and \eqref{eqn: plain} into Proposition~\ref{Prop: lower L2} with $q=\frac{1}{2}$
(and choosing $V$ to be a sufficiently large fixed constant so that $C/e^{V}$ kills the implicit constant) to get that 
 \[\E[|\sum_{n\in \CA_R(x)} f(n) |] \gg  \sqrt{|\CA_{R}(x)|}, \]
 where we remind the readers the size of $|\CA_{R}(x)|$ is estimated in \eqref{eqn: sieve}. 
 This completes the proof of Theorem~\ref{thm: lower}.

\section{Proof of Proposition~\ref{Prop: lower L2}}\label{s: lowereuler}

The proof proceeds the same as in \cite[Proposition 3]{HarperLow} (see also \cite{HNR15}) and we provide a self-contained proof here and highlight some small modifications. 

Let $P(n)$ denote the largest prime factor of $n$ as before. We have assumed that \eqref{eqn: Rsquare} holds, e.g. $R\le \sqrt{x}$ (This restriction is not crucial but makes the notation later easier). Let $\epsilon$ denote a Rademacher random variable independent of $f(n)$, and recall that $\sumstar$ indicates that the variable $n$ under the summation is $R$ rough. For $1/2\le q\le 9/10$, we have 
\[
\begin{split}
    \E[|\sumstar_{\substack{n\le x\\ P(n)>\sqrt{x}}}f(n) |^{2q}] & = \frac{1}{2^{2q}} \E[|\sumstar_{\substack{n\le x\\ P(n)\le\sqrt{x}}}f(n) + \sumstar_{\substack{n\le x\\ P(n)>\sqrt{x}}}f(n)+\sumstar_{\substack{n\le x\\ P(n)>\sqrt{x}}}f(n)-\sumstar_{\substack{n\le x\\ P(n)\le\sqrt{x}}}f(n)|^{2q}]\\
    & \le \E[|\sumstar_{\substack{n\le x\\ P(n)\le\sqrt{x}}}f(n) + \sumstar_{\substack{n\le x\\ P(n)>\sqrt{x}}}f(n)|^{2q}] + \E[|\sumstar_{\substack{n\le x\\ P(n)>\sqrt{x}}}f(n)-\sumstar_{\substack{n\le x\\ P(n)\le\sqrt{x}}}f(n)|^{2q}]\\
    & = 2\E[|\epsilon \sumstar_{\substack{n\le x\\ P(n)>\sqrt{x}}}f(n) + \sumstar_{\substack{n\le x\\ P(n)\le\sqrt{x}}}f(n)|^{2q}] = 2\E[|\sumstar_{n\le x} f(n)|^{2q}],
\end{split}
\]
where the last step we used the law of \[\epsilon \sumstar_{\substack{n\le x\\ P(n)>\sqrt{x}}}f(n)= \epsilon \sum_{\sqrt{x}<p\le x} f(p) \sumstar_{m\le x/p} f(m) \] conditional on $(f(p))_{R\le p \le \sqrt{x}}$ is the same as the law of $\sumstar_{\substack{n\le  x\\ P(n)>\sqrt{x}}}f(n)$. By the above deduction, it suffices to give a lower bound on $\|\sumstar_{\substack{n\le x\\ P(n)>\sqrt{x}}}f(n)\|_{2q}$. 
Do the decomposition
\[\sumstar_{\substack{n\le x\\ P(n)>\sqrt{x}}}f(n) =\sum_{ \sqrt{x}\le p \le x} f(p) \sumstar_{m\le x/p}f(m).\]
The inner sum is determined by $(f(p))_{R\le p\le \sqrt{x}}$ and apply the Khintchine's inequality (see \cite[Lemma 3.8.1]{Gut} for the Rademacher case, and the Steinhaus case may be proved similarly) to get
\[\E[|\sumstar_{\substack{n\le x\\ P(n)>\sqrt{x}}}f(n)|^{2q}] \gg \E[(\sum_{\sqrt{x}< p \le x} |\sumstar_{m\le x/p}f(m)|^{2} )^{q}] \ge \frac{1}{(\log x)^{q}} \E[(\sum_{\sqrt{x}< p \le x} \log p\cdot|\sumstar_{m\le x/p}f(m)|^{2})^{q}]. \]
Next, do the smoothing step as we did in the upper bound case. Again set $X = e^{\sqrt{\log x}}$. 
Write 
\[\sum_{\sqrt{x}< p\le x} \log p \cdot |\sumstar_{m\le x/p}f(m)|^{2} = \sum_{\sqrt{x}<p\le x} \log p \cdot  \frac{X}{p} \int_{p}^{p(1+1/X)} |\sumstar_{m\le x/p} f(m)|^{2}dt. \]
One has $|a+b|^{2}\ge a^{2}/4 - \min\{|b|^{2}, |a/2|^{2}\}\ge 0$ and thus the above is at least 
\begin{equation}\label{eqn: sqrt}
    \begin{split}
   & \frac{1}{4} \sum_{\sqrt{x}<p\le x} \log p \cdot  \frac{X}{p} \int_{p}^{p(1+1/X)} |\sumstar_{m\le x/t} f(m)|^{2}dt \\
   & -\sum_{\sqrt{x}<p\le x} \log p \cdot \frac{X}{p} \int_{p}^{p(1+1/X)} \min\{|\sumstar_{x/t\le m\le x/p} f(m)|^{2}, \frac{1}{4} |\sumstar_{m\le x/t} f(m)|^{2}\}.
   \end{split}
\end{equation}
It follows that the quantity we are interested in has the lower bound
\begin{equation}\label{eqn: mid step}
    \begin{split}
\E[|\sumstar_{\substack{n\le x\\ P(n)>\sqrt{x}}}f(n)|^{2q}] \ge & \frac{1}{(\log x)^{q}}\E[(\frac{1}{4} \sum_{\sqrt{x}<p\le x} \log p \cdot \frac{X}{p} \int_{p}^{p(1+1/X)} |\sumstar_{m\le x/t} f(m)|^{2}dt)^{q}] \\
& - \frac{1}{(\log x)^{q}}\E[(\sum_{\sqrt{x}<p\le x} \log p \cdot \frac{X}{p} \int_{p}^{p(1+1/X)} |\sumstar_{ x/t<m\le x/p} f(m)|^{2}dt)^{q}].
\end{split}
\end{equation}
Use H\"older's inequality and throw away the $R$-rough condition to upper bound the subtracted term in \eqref{eqn: mid step} by
\[\begin{split}
    & \le \frac{1}{(\log x)^{q}} \left(\sum_{\sqrt{x}<p\le x} \log p \cdot \frac{X}{p} \int_{p}^{p(1+1/X)}\E[ |\sum_{ x/t<m\le x/p} f(m)|^{2}]dt\r)^{q}\\
    & \ll \frac{1}{(\log x)^{q}} \Big(\sum_{\sqrt{x}<p\le x } \log p \cdot  (\frac{x}{pX} +1) \Big)^{q} \ll \frac{1}{(\log x)^{q}}(\frac{x\log x}{X} +x)^{q} \ll (\frac{x}{\log x})^{q}. 
\end{split}
\]
The first term in \eqref{eqn: mid step} (without the factor $1/4(\log x)^{q}$) is 
\[
\begin{split}
& \E[(\sum_{\sqrt{x}<p\le x} \log p \cdot \frac{X}{p} \int_{p}^{p(1+1/X)} |\sumstar_{m\le x/t} f(m)|^{2}dt)^{q}] \\
   & \ge \E[(\int_{\sqrt{x}}^{x} \sumstar_{\frac{t}{1+1/X} <p\le t } \log p \cdot  \frac{X}{p}|\sumstar_{m\le x/t}f(m)|^{2} dt )^{q}]\\
   & \gg \E [(\int_{\sqrt{x}}^{x}|\sumstar_{m\le x/t}f(m)|^{2}dt )^{q}] = x^{q} \E[(\int_{1}^{\sqrt{x}  }  |\sumstar_{m\le z} f(m) |^{2} \frac{dz}{z^{2}})^{q}],
\end{split}
\]
where in the last inequality we used the prime number theorem.
To this end, we impose the smooth condition to invert the sums to Euler products. We have for any large $V$, 
\[
\begin{split}
 &   \E[(\int_{1}^{\sqrt{x}  }  |\sumstar_{m\le z} f(m) |^{2} \frac{dz}{z^{2}})^{q}] \ge \E[(\int_{1}^{\sqrt{x}  }  |\sumstar_{\substack{m\le z\\ x-\text{smooth}}} f(m) |^{2} \frac{dz}{z^{2+8V/\log x}})^{q}]\\
 & \ge \E[(\int_{1}^{+\infty  }  |\sumstar_{\substack{m\le z\\ x-\text{smooth}}} f(m) |^{2} \frac{dz}{z^{2+8V/\log x}})^{q}]- \E[(\int_{\sqrt{x}}^{+\infty  }  |\sumstar_{\substack{m\le z\\ x-\text{smooth}}} f(m) |^{2} \frac{dz}{z^{2+8V/\log x}})^{q}]\\
 & \ge \E[(\int_{1}^{+\infty  }  |\sumstar_{\substack{m\le z\\ x-\text{smooth}}} f(m) |^{2} \frac{dz}{z^{2+8V/\log x}})^{q}]- \frac{1}{e^{2Vq}}\E[(\int_{1}^{+\infty  }  |\sumstar_{\substack{m\le z\\ x-\text{smooth}}} f(m) |^{2} \frac{dz}{z^{2+4V/\log x}})^{q}].
\end{split}
\]
Apply Lemma~\ref{lem: pars} to get that the first term is 
\begin{equation}\label{eqn: 1stterm}
 \gg \E[(\int_{-\frac{1}{2}}^{\frac{1}{2}}|F^{(R)}(\frac{1}{2} + \frac{4V}{\log x} + it)|^{2} dt )^{q}].   
\end{equation}
For the second term, an application of Lemma~\ref{lem: pars} gives it is bounded by
\begin{equation}\label{eqn: 2ndterm}
    \ll e^{-2Vq} \E[(\int_{-\infty}^{+\infty} \frac{|F^{(R)}(\frac{1}{2} + \frac{2V}{\log x} + it)}{|\frac{1}{2} + \frac{2V}{\log x} + it|^{2}} dt )^{q}] \ll e^{-2Vq} \E[(\int_{-\frac{1}{2}}^{\frac{1}{2}} |F^{(R)}(\frac{1}{2} + \frac{2V}{\log x} + it)|^{2} )^{q}]
\end{equation}
where in the last step we used the fact that $f(n)n^{it}$ has the same law as $f(n)$ and $\sum_{n \ge 1 }n^{-2}$ converges. Bounds in \eqref{eqn: 1stterm} and \eqref{eqn: 2ndterm} together give the desired bound for the first term in \eqref{eqn: mid step} and we complete the proof.

\section{Proof of Proposition~\ref{prop: random low}}\label{s: lowerprop}
In this section, 
we prove Proposition~\ref{prop: random low}. The proof significantly relies on the following proposition, which is a mean value estimate of the product of $|F^{(R)}(\sigma + it_1)|^{2}$
and $|F^{(R)}(\sigma + it_2)|^{2}$. Our upper bound matches the guess if you pretend the two products are independent. 
\begin{proposition}\label{prop: correlation}
Let $V$ be a large
fixed constant. Let $x$ be sufficiently large and $\log \log R \gg \sqrt{\log \log x}$. Let $F^{(R)}(s)$ be defined as in \eqref{eqn: F}. Let $\mathcal{L}$ be the random set defined in \eqref{eqn: L}. Then we have
    \begin{equation}\label{eqn: simple}
\E[(\int_{\mathcal{L}}|F^{(R)}(\frac{1}{2} +\frac{4V}{\log x} + it)|^{2}  dt)^{2}]  \ll (\frac{\log x}{V\log R})^{2}. 
\end{equation}
\end{proposition}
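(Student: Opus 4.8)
The plan is to expand the square by Tonelli's theorem (the integrand is nonnegative) and write, with $s_j:=\tfrac12+\tfrac{4V}{\log x}+it_j$,
\[
\E\Big[\Big(\int_{\mathcal{L}}|F^{(R)}(\tfrac12+\tfrac{4V}{\log x}+it)|^{2}\,dt\Big)^{2}\Big]=\int_{-1/2}^{1/2}\int_{-1/2}^{1/2}\E\big[\mathbf{1}_{L(t_1)}\mathbf{1}_{L(t_2)}\,|F^{(R)}(s_1)|^{2}|F^{(R)}(s_2)|^{2}\big]\,dt_1\,dt_2,
\]
and then to split the region of integration according to the size of $|t_1-t_2|$. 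The heuristic is that a prime $p$ with $|t_1-t_2|\log p\le1$ makes $F^{(R)}(s_1)$ and $F^{(R)}(s_2)$ strongly correlated while larger primes make them essentially independent, so the barrier event $L(t_1)$ will be used only to tame the correlated part, and $\mathbf{1}_{L(t_2)}$ will simply be discarded.

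For $|t_1-t_2|>1/\log R$ I would drop both indicators and apply \eqref{eqn: 4.3} of Lemma~\ref{lem: 2meansquare} to the prime range $[R,x]$ with shift $t_1-t_2$ (valid since $R>e^{1/|t_1-t_2|}$); a routine estimate $\sum_{R\le p\le x}p^{-1-8V/\log x}\le\log\!\big(\tfrac{\log x}{\log R}\big)-\log(8V)+O(1)$, the $-\log(8V)$ coming from the damping at $\sigma\asymp V/\log x$, gives $\E[|F^{(R)}(s_1)|^{2}|F^{(R)}(s_2)|^{2}]\ll(\log x/(V\log R))^{2}$ pointwise, and this region has measure $\le1$. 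For $|t_1-t_2|\le1/\log R$ I set a decorrelation threshold $y$, taken to be the largest number of the form $x^{e^{-(j+1)}}$ for which every prime $p\le y$ satisfies $|t_1-t_2|\log p\le1$, but never smaller than $z:=x^{e^{-(\lfloor\log V\rfloor+4)}}\asymp x^{1/V}$ (this floor is reached exactly when $|t_1-t_2|$ drops below a constant multiple of $V/\log x$, and then all primes up to $z$ are already correlated). I factor $F^{(R)}=F^{(R)}_{\le y}\cdot F_{>y}$ into its independent parts over $[R,y]$ and $(y,x]$, keep $\mathbf{1}_{L(t_1)}$ and drop $\mathbf{1}_{L(t_2)}$, and bound $\mathbf{1}_{L(t_1)}$ above by the indicator of a single constraint from \eqref{eqn: Lt} — the one at the level $j$ with $x^{e^{-(j+1)}}=y$, or at the smallest admissible level $\lfloor\log V\rfloor+3$ in the floor case. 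Up to an independent factor over $[R,R^{e^{B}}]$ with bounded moments, this yields the pointwise bound $\mathbf{1}_{L(t_1)}\,|F^{(R)}_{\le y}(s_1)|\ll\tfrac{\log y}{\log R}\,e^{D(x)}$. Since the truncated barrier depends only on $(f(p))_{p\le y}$, the expectation factors over $[R,y]$ and $(y,x]$; on the first factor I use this pointwise bound together with the trivial first moment $\E[|F^{(R)}_{\le y}(s_2)|^{2}]\ll\log y/\log R$ from Lemma~\ref{lem: mean square} for the unavoidably correlated $s_2$-part, and on the second I use Lemma~\ref{lem: 2meansquare} (the $\cos$ terms summing to $O(1)$ since all primes there have $|t_1-t_2|\log p\gg1$) to get $\E[|F_{>y}(s_1)|^{2}|F_{>y}(s_2)|^{2}]\ll(\log x/(V\log y))^{2}$. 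Multiplying, the integrand in this range is $\ll(\log y/\log R)^{3}\,e^{2D(x)}\,(\log x/(V\log y))^{2}$, and integrating over $|t_1-t_2|\asymp1/\log y$ (a $d(1/\log y)$ element, and a slab of measure $\asymp V/\log x$ in the floor case) produces a total $\ll(\log x/(V\log R))^{2}\cdot e^{2D(x)}(\log\log x)^{O(1)}/\log R$. By \eqref{eqn: c}, in the regime $\log\log R\gg\sqrt{\log\log x}$ one has $D(x)=\tfrac14\sqrt{\log\log x-\log\log R}\le\tfrac14\sqrt{\log\log x}$, while the same hypothesis makes $\log R$ at least $e^{K\sqrt{\log\log x}}$ for a suitably large fixed $K$; hence $e^{2D(x)}(\log\log x)^{O(1)}/\log R=o(1)$ and this range also contributes $\ll(\log x/(V\log R))^{2}$, which completes the proof.

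The crux — and the place where the threshold $\log\log R\asymp\sqrt{\log\log x}$ enters — is the correlated small-prime part of the near range: $|F^{(R)}_{\le y}(s_2)|^{2}$ is built from the same primes as the barrier-controlled $|F^{(R)}_{\le y}(s_1)|^{2}$, so it cannot be decoupled by independence, and imposing $\mathbf{1}_{L(t_2)}$ as well would not help. Replacing the $s_1$-factor by its pointwise barrier bound and the $s_2$-factor by its plain first moment costs a factor $e^{2D(x)}\cdot(\log\log x)^{O(1)}$, and the hypothesis $\log\log R\gg\sqrt{\log\log x}$ is exactly what makes $\log R$ large enough to swallow this loss. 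Everything else — the reduction to a single barrier constraint, the independent splittings at $y$ and at $R^{e^{B}}$, and the one- and two-dimensional mean square inputs — is routine bookkeeping via Lemmas~\ref{lem: mean square} and~\ref{lem: 2meansquare}, and notably no two-dimensional Girsanov computation is needed.
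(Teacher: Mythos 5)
The proposal is correct and follows essentially the same route as the paper: expand by Fubini, handle $|t_1-t_2|>1/\log R$ with the decorrelated two-dimensional mean square, and for the near range split the Euler product at $y\approx e^{1/|t_1-t_2|}$, use the barrier to get a pointwise bound on the small-prime part of one factor, a first moment for the small-prime part of the other, and a decorrelated mean square for the large primes, with the definition of $c$ in \eqref{eqn: c} guaranteeing $e^{2D(x)}\le(\log R)^{1/2}$ so that the accumulated loss is absorbed. The only cosmetic difference is that you apply the barrier to the $s_1$-factor and discard $\mathbf{1}_{L(t_2)}$, whereas the paper first uses shift-invariance to set $t_1=0$ and then applies the barrier $\mathbf{1}_{L(t)}$ to the shifted factor; these are symmetric and yield the same estimates.
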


\begin{proof}[Proof of Proposition~\ref{prop: random low} assuming Proposition~\ref{prop: correlation}]
The proof starts with an application of H\"older's inequality. We have 
\begin{equation}\label{eqn: smart}
\E[(\int_{\mathcal{L}}|F^{(R)}(\frac{1}{2} +\frac{4V}{\log x} + it)|^{2}  dt)^{q}] \ge \frac{(\E[\int_{\mathcal{L}}|F^{(R)}(\frac{1}{2} +\frac{4V}{\log x} + it)|^{2}  dt])^{2-q}}{(\E[(\int_{\mathcal{L}}|F^{(R)}(\frac{1}{2} +\frac{4V}{\log x} + it)|^{2}  dt)^{2}])^{1-q}}.
\end{equation}
Proposition~\ref{prop: correlation} gives a desired upper bound for the denominator. We next give a lower bound on the numerator. By using that $f(n)n^{it}$ has the same law as $f(n)$, the numerator is
\[(\int_{-1/2}^{1/2}\E [\mathbf{1}_{L(t)}|F^{(R)}(\frac{1}{2} +\frac{4V}{\log x} + it)|^{2}]  dt)^{2-q}= (\E[ \mathbf{1}_{L(0)}|F^{(R)}(\frac{1}{2} +\frac{4V}{\log x} )|^{2}] )^{2-q} .\]
We next use Proposition~\ref{prop 5} by taking $n=\lfloor \log \log x - \log \log R \rfloor - (B+1) - \lfloor \log V \rfloor$, $a =D(x)=c\sqrt{\log \log x -\log \log R } $ and $h(j)=0$ to conclude that $\tilde{\P}(\mathbf{1}_{L(0)})\gg 1$. Combining with the mean square calculation~\eqref{eqn: meansquare}, we have
\begin{equation}\label{eqn: L0}
    \E [\mathbf{1}_{L(0)}|F^{(R)}(\frac{1}{2} +\frac{4V}{\log x} + it)|^{2}] \gg \tilde{\P}(\mathbf{1}_{L(0)})\cdot \E [|F^{(R)}(\frac{1}{2} +\frac{4V}{\log x} + it)|^{2}] \gg \frac{\log x}{V \log R}.
\end{equation}
We complete the proof by plugging \eqref{eqn: simple} and \eqref{eqn: L0} into \eqref{eqn: smart}. 
\end{proof}

The proof of Proposition~\ref{prop: correlation} is a bit involved and its proof is inspired by \cite[Key proposition 5]{HarperLow} and \cite[Multiplicative chaos results 4]{Harperlargevalue}. We are not using the ``two-dimensional Girsanov-type" computation as used in \cite[Key proposition 5]{HarperLow} which significantly simplified the proof. We do not expect any further savings when $R$ is as large as stated in Proposition~\ref{prop: correlation} while for a smaller $R$, one might expect there could be further cancellation as in \cite[Key Proposition 5]{HarperLow} which may be verified by adapting the ``two-dimensional Girsanov-type" calculation.

\begin{proof}[Proof of Proposition~\ref{prop: correlation}]
    Expand the square and the left hand side of \eqref{eqn: simple} equals
    \[\E [ \int_{-1/2}^{1/2}\mathbf{1}_{L(t_1)} |F^{(R)}(\frac{1}{2} +\frac{4V}{\log x} + it_1)|^{2} dt_1 \int_{-1/2
    }^{1/2} \mathbf{1}_{L(t_2)}|F^{(R)}(\frac{1}{2} +\frac{4V}{\log x} + it_2)|^{2} dt_2   ]. \]
   By using that $f(n)n^{it}$ has the same law as $f(n)$, we write the above as ($t:= t_1-t_2$)
   \begin{equation}\label{eqn: tshift}
      \int_{-1}^{1}\E [\mathbf{1}_{L(0)} |F^{(R)}(\frac{1}{2} +\frac{4V}{\log x} )|^{2}  \mathbf{1}_{L(t)}|F^{(R)}(\frac{1}{2} +\frac{4V}{\log x} + it)|^{2} ] dt. 
   \end{equation}
For $|t|$ large enough, the two factors behave independently, which is the easier case. Indeed,  if $|t|>1/\log R$, drop the indicator functions and bound the corresponding integration by
\[\ll \max_{ 1/\log R < |t|\le 1} \E [  |F^{(R)}(\frac{1}{2} +\frac{4V}{\log x})|^{2} \cdot |F^{(R)}(\frac{1}{2} +\frac{4V}{\log x} + it)|^{2}   ]  .\] 
 Apply the two dimensional mean square calculation \eqref{eqn: 4.3} with $(x, y)=(R, x) $  to conclude that the above is 
 \[\ll \Big(\frac{\log x}{V \log R}\Big)^{2}.  \]
 
We next focus on the case $|t|\le 1/\log R$. Since $f(p)$ are independent of each other, we can decompose the Euler products into pieces and analyze their contributions to \eqref{eqn: tshift} separately.
Define the following three sets of primes based on the sizes of primes
\[\mathcal{P}_1: = \{p~\text{prime}: R\le p < x^{e^{-(\lfloor \log \log x -\log \log R \rfloor-B-2)}}\},\]
\[\mathcal{P}_2: = \{p~\text{prime}:  x^{e^{-(\lfloor \log \log x -\log \log R \rfloor-B-2)}} \le p \le x^{e^{-(\lfloor \log V \rfloor +3)}}\},\]
and 
\[\mathcal{P}_3: = \{p~\text{prime}: x^{e^{-(\lfloor \log V \rfloor +3)}} < p \le x \}.\]
We proceed as follows. Note that the events $L(0)$ and $L(t)$ are irrelevant to $f(p)$ for $p\in \mathcal{P}_1 \cup \mathcal{P}_3$. For partial products over primes $p\in \mathcal{P}_1 \cup \mathcal{P}_3$, we directly do mean square calculations.
For partial products over primes $p\in \mathcal{P}_2$, we will crucially use the indicator functions $\mathbf{1}_{L(0)}$ and $\mathbf{1}_{L(t)}$ defined in \eqref{eqn: Lt} with $j= \lfloor \log V \rfloor +3$. This separation gives that the integration in \eqref{eqn: tshift} over $|t|\le 1/\log R$ is
\begin{equation}\label{eqn: rewrite}
 \begin{split}
    \int_{|t|\le \frac{1}{\log R}} \E[\prod_{p\in \mathcal{P}_1 \cup \mathcal{P}_3 } |1-\frac{f(p)}{p^{\frac{1}{2} +\frac{4V}{\log x}}}|^{-2} |1-\frac{f(p)}{p^{\frac{1}{2} +\frac{4V}{\log x}+it}}|^{-2}] \\ \times \E[\mathbf{1}_{L(0)} \mathbf{1}_{L(t)} \prod_{p\in \mathcal{P}_2 } |1-\frac{f(p)}{p^{\frac{1}{2} +\frac{4V}{\log x}}}|^{-2} |1-\frac{f(p)}{p^{\frac{1}{2} +\frac{4V}{\log x}+it}}|^{-2}]  dt. 
\end{split}   
\end{equation}
We first upper bound the expectation over primes in $\mathcal{P}_1 \cup \mathcal{P}_3$ uniformly over all $t$. By using independence between $f(p)$ and \eqref{eqn: 4.2}, we can bound it as 
\begin{equation}\label{eqn: p13}
  \ll \exp\Big( \sum_{p\in \mathcal{P}_1} \frac{4}{p^{1+\frac{8V}{\log x}}} + \sum_{p\in \mathcal{P}_3} \frac{4}{p^{1+\frac{8V}{\log x}}} \Big).  
\end{equation}
By simply using the prime number theorem and the definition of $\mathcal{P}_1$ and $\mathcal{P}_3$, one has that both sums in \eqref{eqn: p13}  are $\ll 1$ so that \eqref{eqn: p13} is $\ll 1$, where we remind readers that $B$ is a fixed constant. Now our task is reduced to establishing the following
\begin{equation}\label{eqn: p2}
    \int_{|t|\le \frac{1}{\log R}}   \E[\mathbf{1}_{L(0)} \mathbf{1}_{L(t)} \prod_{p\in \mathcal{P}_2 } |1-\frac{f(p)}{p^{\frac{1}{2} +\frac{4V}{\log x}}}|^{-2} |1-\frac{f(p)}{p^{\frac{1}{2} +\frac{4V}{\log x}+it}}|^{-2}]  dt \ll \Big(\frac{\log x}{V \log R}\Big)^{2}.  
\end{equation}

Our strategy would be, roughly speaking, using the barrier event $\mathbf{1}_{L(t)}$ to bound certain partial products involved with $t$ directly and then use the mean square calculation to deal with the rest of the products. The exact partial products that we will apply barrier events would depend on the size of $t$.  

We first do a simple case, which helps us get rid of the very small $ t$, say $|t|<V/\log x$. We use the condition $\mathbf{1}_{L(t)}$ and pull out the factors related to $L(t)$ to get that the contribution from $|t|<V/\log x$ is at most 
\[ 
\begin{split}
   & \ll  \int_{|t|\le \frac{V}{\log x}} e^{2c\sqrt{\log \log x- \log \log R}} \cdot (\frac{\log x}{V \log R})^{2}\cdot\E[\mathbf{1}_{L(0)}\prod_{p\in \mathcal{P}_2 } |1-\frac{f(p)}{p^{\frac{1}{2} +\frac{4V}{\log x}}}|^{-2}]  dt \\ & \ll \frac{V}{\log x} \cdot  e^{2c\sqrt{\log \log x- \log \log R}} \cdot \Big(\frac{\log x}{V \log R}\Big)^{2} \cdot \E[\prod_{p\in \mathcal{P}_2 } |1-\frac{f(p)}{p^{\frac{1}{2} +\frac{4V}{\log x}}}|^{-2}] \\
   & \ll \Big(\frac{\log x}{V \log R}\Big)^{2},
\end{split}
\]
where in the second to last step we dropped the $\mathbf{1}_{L(0)}$ condition, and in the last step we applied \eqref{eqn: meansquare} together with $ \log R \ge \exp(4c  \sqrt{\log \log x}) $ where $c$ is defined in \eqref{eqn: c}. Thus we only need to establish the following 
\begin{equation}\label{eqn: p2t}
    \int_{\frac{V}{\log x} \le |t|\le \frac{1}{\log R}}   \E[\mathbf{1}_{L(0)} \mathbf{1}_{L(t)} \prod_{p\in \mathcal{P}_2 } |1-\frac{f(p)}{p^{\frac{1}{2} +\frac{4V}{\log x}}}|^{-2} |1-\frac{f(p)}{p^{\frac{1}{2} +\frac{4V}{\log x}+it}}|^{-2}]  dt \ll \Big(\frac{\log x}{V \log R}\Big)^{2} .  
\end{equation}

We now enter the crucial part where we will apply the barrier events according to the size of $|t|$.
We \textit{decompose the set $\mathcal{P}_2$ into two parts according to $|t|$}.
For each fixed $V/\log x\le |t| \le 1/\log R$, we write 
\[\mathcal{P}_2 = \mathcal{S}(t) \cup \mathcal{M}(t), \]
where 
\[\mathcal{S}(t):=\{p~\text{prime}:  x^{e^{-(\lfloor \log \log x -\log \log R \rfloor-B-2)}} \le p \le e^{\frac{V}{|t|}}\},\]
and 
\[\mathcal{M}(t):= \{p~\text{prime}:  e^{\frac{V}{|t|}} \le p \le x^{e^{-(\lfloor \log V \rfloor +3)}}\}.\]
The set of primes $\mathcal{S}(t)$ would be those we will apply barrier events and $\mathcal{M}(t)$ would be estimated by a mean square calculation. Note that for $p\in \mathcal{M}(t)$, there is a nice decorrelation as we needed in \eqref{eqn: 4.3} due to that $p\ge e^{V/|t|}$. 
Let us now see how such a decomposition of $\mathcal{P}_2$ would help us. We use a local notation
\[G(p, t): = |1-\frac{f(p)}{p^{\frac{1}{2} + \frac{4V}{\log x}+it}}|^{-2}. \]
Then the quantity in \eqref{eqn: p2t} is the same as 
\[\int_{\frac{V}{\log x} \le |t|\le \frac{1}{\log R}} \E[\mathbf{1}_{L(0)} \mathbf{1}_{L(t)} \prod_{p\in \mathcal{P}_2 }G(p, 0) \prod_{p\in \mathcal{S}(t) }G(p, t) \prod_{p\in \mathcal{M}(t) }G(p, t)  ]  dt.\]
We apply the barrier events condition $\mathbf{1}_{L(t)}$ to bound the product over $p\in \mathcal{S}(t)$ so that the above is at most 
\begin{equation}\label{eqn: 8.6}
  \ll  \Big(\frac{V}{ \log R}\Big)^{2} \cdot e^{2c\sqrt{\log \log x - \log \log R}} \cdot \int_{\frac{V}{\log x} \le |t|\le \frac{1}{\log R}}  \frac{1}{t^{2}}\E[\mathbf{1}_{L(0)} \prod_{p\in \mathcal{P}_2 }G(p, 0) \prod_{p\in \mathcal{M}(t) }G(p, t)  ]  dt. 
\end{equation}
We next upper bound the expectation in \eqref{eqn: 8.6} uniformly for all $V/\log x\le |t| \le 1/\log R$. We first drop the indicator function and rewrite the product based on the independence between $f(p)$ to derive that
\[\E[\mathbf{1}_{L(0)} \prod_{p\in \mathcal{P}_2 }G(p, 0) \prod_{p\in \mathcal{M}(t) }G(p, t)]\le \E[ \prod_{p\in \mathcal{S}(t) }G(p, 0)] \cdot  \E [\prod_{p\in \mathcal{M}(t) }G(p, 0)G(p, t)].\]
 Use the mean square calculation results in \eqref{eqn: meansquare} and \eqref{eqn: 4.3} to further get an upper bound on the expectation
 \[\ll \frac{V/|t|}{\log R} \cdot \Big(\frac{t\log x}{V^{2}}\Big)^{2}\ll \frac{|t|(\log x)^{2}}{V^{3}\log R} .   \]
Now we plug the above bound to \eqref{eqn: 8.6} to get that \eqref{eqn: 8.6} is crudely bounded by 
\[\Big(\frac{\log x}{ \log R}\Big)^{2} \cdot \frac{e^{2c\sqrt{\log \log x- \log \log R}}}{V\log R} \cdot \int_{\frac{V}{\log x} \le |t|\le \frac{1}{\log R}} \frac{1}{|t|} dt \ll \Big(\frac{\log x}{ \log R}\Big)^{2} \cdot \frac{\log \log x}{V\cdot e^{2c\sqrt{\log\log x}}},  \]
where we used that $ \log R \ge \exp(4c  \sqrt{\log \log x}) $ and $c\gg 1$ is defined in \eqref{eqn: c}. The last factor tends to zero as $x\to+\infty$, so it is surely $\ll \frac{1}{V^{2}}$ for a fixed large constant $V$. This completes the proof of \eqref{eqn: p2t} and thus the proof of the proposition. 
\end{proof}

\section{Concluding remarks}\label{s: conclude}

\subsection{Typical behavior and small perturbations}\label{Sec: typical}
We give a sketch of the situation when $a(n)$ itself is independently and randomly chosen. 
We write 
\begin{equation}
   a(n) = r(n) X(n) 
\end{equation}
where $r(n)>0$ is deterministic and $X(n)$ are independently distributed with $\E[|X(n)|^{2}]=1$. We may naturally assume that there is some $r$ such that
\[r(n) \asymp r(m) \asymp r \]
for all $n, m$, i.e. no particular random variable would dominate the whole sum in size. One may also just assume $r=1$ throughout the discussion here. 
We claim that for typical $X(n)$, the random sums satisfy the sufficient condition established in \cite[Theorem 3.1]{SoundXu} on having a Gaussian limiting distribution.

The key condition one needs to verify is that almost surely (in terms of over $X(n)$), we have 
\begin{equation}\label{eqn: R}
R_N(\textbf{a}) : =\sum_{\substack{ m_i, n_j\le N \\
 m_i\neq n_j \\ m_1m_2=n_1n_2 }}  a(n_1)a(n_2) \overline{a(m_1) } \overline{a(m_2)} = o(r^{4}N^{2}).    
\end{equation}
The proof of \eqref{eqn: R} is straightforward. By using the divisor bound, we know there are $\ll N^{2+\epsilon}$ number of quadruples $(m_1, m_2, n_1, n_2)$ under the summation. If we expect some square-root cancellation among $a(n_1)a(n_2) \overline{a(m_1) } \overline{a(m_2)}$, then $R_N(\textbf{a})$ above should be around $r^{4}N^{1+\ee}$ typically. 
Indeed, by using the fact that all $a(n)$ are independent, we have the $L^{2}$ bound 
\[\E[|R_N|^{2}] = \E[R_N \overline{R_N}] \ll r^{8} N^{2+\ee}. \]
This leads to, 
almost surely (in terms of over $X(n)$), that we have 
\[R_N(\textbf{a}) = o(r^{4}N^{2}). \]
To this end, by using \cite[Theorem 3.1]{SoundXu}, almost surely, we have a central limit theorem for the random partial sums of a Steinhaus random multiplicative function.
See \cite[Theorem 1.2]{BNR} for a closely related result where they used the method of moments.

In Question~\ref{question main}, we asked if it is possible to characterize the choices of $a(n)$ that give better than square-root cancellation. On one hand, as discussed above, we know for typical $a(n)$, there is just square-root cancellation. On the other hand, if $a(n)$ is a deterministic multiplicative function taking values on the unit circle, then by the fact that $a(n)f(n)$ has the same distribution as $f(n)$ and the result established by Harper \eqref{eqn: harper}, the partial sums $\sum_{n\le N} a(n)f(n)$ have better than square-root cancellation. Our main theorems study one particular example of multiplicative nature.  Combining these observations, 
we believe that \textit{any small perturbation coming from $a(n)$ that destroys the multiplicative structure would make the better than square-root cancellation in \eqref{eqn: harper} disappear}. We ask the following question in a vague way as a sub-question of Question~\ref{question main}.
\begin{question}
Is it true that the only ``essential choice" of $a(n)$ leading to better than square-root cancellation is of multiplicative nature? 
\end{question}

\subsection{Threshold in other settings and the limiting distribution}
The main theorems of this paper prove that there is square-root cancellation for $\log \log R \gg (\log \log x)^{\frac{1}{2}}$. What is the limiting distribution then? We have remarked earlier that one may establish a central limit theorem when $R\gg \exp((\log x)^{c})$ for some constant $c<1$ by understanding the corresponding multiplicative energy. It becomes less clear for smaller $R$. 

\begin{question}
 What is the limiting distribution of  $\sum_{n\in \CA_R(x)} f(n)$ with ``proper" normalization, for all ranges of $R$?   
\end{question}

We finally comment that there is another family of partial sums that naturally has the threshold behavior for better than square-root cancellation. Let $\CA= [x, y]$ with $y\le x$. We would like to know for what range of $y$, typically, 
\[\sum_{x\le n \le x+y} f(n) = o(\sqrt{y}).\]
We believe one can adapt the argument here to find that the threshold behavior is around $\log (x/y) \approx  \sqrt{\log \log x}$. It is certainly interesting to understand the limiting distribution for the short interval case thoroughly, beyond the previous result in \cite{SoundXu}.

\bibliographystyle{plain}
\bibliography{BTSC}{}
\end{document}